\documentclass[preprint,12pt]{elsarticle}

\usepackage[utf8]{inputenc}
\usepackage[T1]{fontenc}
\usepackage{lmodern}
\usepackage{microtype}
\usepackage[a4paper,margin=1in]{geometry}
\usepackage{amsmath,amssymb,amsthm,mathtools}
\usepackage{enumitem}
\usepackage{booktabs}
\usepackage{xcolor}
\usepackage{tikz}
\usetikzlibrary{arrows.meta,positioning,calc,fit,backgrounds,decorations.pathreplacing,shapes.geometric}
\usepackage[colorlinks=true,linkcolor=blue!60!black,citecolor=blue!60!black,urlcolor=blue!60!black]{hyperref}

\newtheorem{theorem}{Theorem}[section]
\newtheorem{lemma}[theorem]{Lemma}

\newtheorem{corollary}[theorem]{Corollary}

\theoremstyle{definition}
\newtheorem{definition}[theorem]{Definition}

\theoremstyle{remark}
\newtheorem{remark}[theorem]{Remark}
\newtheorem*{problemHLMP}{Problem 4.1 (Hoang--Levit--Mandrescu--Pham)}
\newtheorem*{conjHLMP}{Conjecture 4.2 (Hoang--Levit--Mandrescu--Pham)}

\newcommand{\W}{\mathbf W}

\newcommand{\Ind}{\operatorname{Ind}}

\begin{document}
	
	\begin{frontmatter}
		
		\title{The $2$-Quasi-Regularizability Conjecture and Independence Polynomials of $\W_p$ Graphs}

		\author[DEPTO]{Kevin Pereyra}\ead{kdpereyra@unsl.edu.ar}
		\address[DEPTO]{Departamento de Matem\'atica, Universidad Nacional de San Luis, San Luis, Argentina}
		
		\begin{abstract}
Hoang, Levit, Mandrescu and Pham asked for structural conditions ensuring that the independence polynomial of a $\W_p$ graph is log-concave, or at least unimodal, and conjectured that a connected $\W_2$ graph is $2$-quasi-regularizable if and only if $n(G)\ge 3\alpha(G)$ (2026).  We prove the conjecture.  The key point is a local expansion theorem: if $G$ is connected and belongs to $\W_2$, then every non-maximum independent set $A$ satisfies
\[
|N_G(A)|\ge 2|A|.
\]
Thus the only possible obstruction to $2$-quasi-regularizability in a connected $\W_2$ graph comes from maximum independent sets, where the condition is exactly $n(G)-\alpha(G)\ge 2\alpha(G)$.

We also give coefficient criteria for log-concavity and unimodality of independence polynomials of $\W_p$ graphs.  These criteria combine the standard two-sided coefficient inequalities, as collected by Hoang--Levit--Mandrescu--Pham, with $\lambda$-quasi-regularizability.  The new $p=2$ threshold proved here inserts the missing case into the same framework and yields explicit log-concavity and unimodality regions for connected $\W_2$ graphs.
		\end{abstract}
		
		\begin{keyword}
			well-covered graph; $1$-well-covered graph; $\W_p$ graph; quasi-regularizable graph; independence polynomial; log-concavity; unimodality.
			\MSC[2020] 05C69; 05C30; 05C31; 05C35.
		\end{keyword}
		
	\end{frontmatter}

\section{Introduction}

Let $G$ be a finite simple graph.  Its independence polynomial, introduced by Gutman and Harary \cite{GutmanHarary}, is
\[
        I(G;x)=\sum_{k=0}^{\alpha(G)}s_k x^k,
\]
where $s_k$ denotes the number of independent sets of cardinality $k$ in $G$.  The sequence $(s_0,s_1,\ldots,s_{\alpha(G)})$ is a natural enumerative invariant of $G$, but it is far from being constrained in general: Alavi, Malde, Schwenk and Erdos showed that prescribed order patterns can occur among its coefficients \cite{AlaviMaldeSchwenkErdos}.  Nevertheless, important graph classes do force regular behavior.  For instance, Heilmann and Lieb's real-rootedness theorem for matching polynomials \cite{HeilmannLieb}, and Chudnovsky and Seymour's real-rootedness theorem for independence polynomials of claw-free graphs \cite{ChudnovskySeymour}, both imply log-concavity in their respective settings.  Root locations and coefficient questions for independence polynomials have been studied from several viewpoints; see, for example, Brown--Hickman--Nowakowski \cite{BrownHickmanNowakowski}, the survey of Levit--Mandrescu \cite{LevitMandrescuSurvey}, and the later work on trees and clique-cover products \cite{Zhu,ZhuChen,KadrawiLevit}.  The broader role of log-concavity in combinatorics is surveyed by Stanley \cite{StanleyLogConcave}, and the modern theory is also influenced by Huh's work on chromatic and matroidal log-concavity \cite{HuhChromatic,HuhKatz}.

This paper concerns the class $\W_p$ introduced by Staples \cite{StaplesThesis,StaplesJGT}.  Well-covered graphs originate in the work of Plummer \cite{Plummer1970,PlummerSurvey}; very well-covered graphs were introduced by Favaron \cite{Favaron}, and structural results for special well-covered families include \cite{FinbowHartnellNowakowski,ToppVolkmann}.  A graph is in $\W_p$ if every $p$ pairwise disjoint independent sets can be enlarged to $p$ pairwise disjoint maximum independent sets.  The class $\W_1$ is precisely the class of well-covered graphs.  The class $\W_2$ is especially important because it coincides, for graphs without isolated vertices, with the class of $1$-well-covered graphs \cite{StaplesJGT,LevitMandrescuRevisited}; related $\W_2$ subclasses were studied by Pinter \cite{PinterThesis}.  These graph classes also occur naturally in edge-ideal theory and Gorenstein graph theory \cite{Villarreal,HoangTrung}.  On the enumerative side, counterexamples, roller-coaster phenomena and positive results for very well-covered graphs appear in \cite{MichaelTraves,LevitMandrescuWellCoveredCounterexamples,LevitMandrescuRoller,LevitMandrescuVeryWellCovered,ChenWang}.  Corona constructions, introduced by Frucht and Harary \cite{FruchtHarary}, provide a large source of well-covered and $\W_p$ examples.  In a recent paper, Hoang, Levit, Mandrescu and Pham studied quasi-regularizability and log-concavity for $\W_p$ graphs and proved the $p$-quasi-regularizability threshold for connected $\W_p$ graphs when $p\ne2$ \cite{HoangLevitMandrescuPham}.  They left the following two questions.

\begin{problemHLMP}
What conditions on the $\W_p$ graph $G$ guarantee that the independence polynomial $I(G;x)$ is log-concave or at least unimodal?
\end{problemHLMP}

\begin{conjHLMP}
Let $G$ be a connected $\W_2$ graph.  Then $G$ is $2$-quasi-regularizable if and only if
\[
        n(G)\ge 3\alpha(G).
\]
\end{conjHLMP}

The exclusion of $p=2$ in the known threshold is not cosmetic.  The cycle $C_5$ belongs to $\W_2$ but is not $2$-quasi-regularizable: if $S$ is a maximum independent set, then $|S|=2$ and $|N(S)|=3<4$.  This is exactly the failure of the inequality $n(C_5)\ge3\alpha(C_5)$; see Figure~\ref{fig:C5}.  The main result below proves that this is the only kind of failure.

\begin{center}\begin{minipage}{0.92\textwidth}\noindent\textbf{Main structural theorem.}\ \itshape
Let $G$ be a connected graph in $\W_2$.  If $A$ is an independent set which is not maximum, then
\[
        |N_G(A)|\ge2|A|.
\]
Consequently, a connected $\W_2$ graph is $2$-quasi-regularizable if and only if $n(G)\ge3\alpha(G)$.
\end{minipage}\end{center}

The proof is local.  If a smallest counterexample $A$ existed, then $|N(A)|=2|A|-1$.  Each vertex of $A$ would have a unique private neighbor; all other neighbors would see at least two vertices of $A$; private neighbors would be forced away from the localization $G_A$; every non-private vertex touching $G_A$ would be universal on $A$; and finally a second localization would build an augmenting set that contradicts the $\W_2$ avoidance property.  Figures~\ref{fig:normal-form}--\ref{fig:final-obstruction} show the successive reductions.

The local expansion theorem and its application to Conjecture 4.2 are the new structural part of the paper.  The coefficient criteria in Section~\ref{sec:coefficients} are included to make the consequence for Problem 4.1 self-contained: they combine known two-sided coefficient inequalities with the quasi-regularizability input, and in the case $\lambda=p$ recover the same discriminant range used in \cite[Theorem~3.3]{HoangLevitMandrescuPham}.

For Problem 4.1 we give a flexible coefficient answer.  If $G$ is connected, $G\in\W_p$, $n=n(G)$, $\alpha=\alpha(G)$, and $G$ is $\lambda$-quasi-regularizable, then the inequalities
\[
        (k+1)s_{k+1}\le\bigl(n-(\lambda+1)k\bigr)s_k,
        \qquad
        p(\alpha-k)s_k\le(k+1)s_{k+1}
\]
lead to explicit log-concavity and unimodality criteria.  The new $p=2$ threshold supplies the missing quasi-regularizability input exactly when $n\ge3\alpha$.

\begin{figure}[t]
\centering
\begin{tikzpicture}[scale=1.05, every node/.style={circle,draw,minimum size=7mm,inner sep=0pt,font=\small}]
  \node[fill=blue!16]   (v1) at (90:1.45)  {$v_1$};
  \node[fill=orange!18] (v2) at (18:1.45)  {$v_2$};
  \node[fill=blue!16]   (v3) at (-54:1.45) {$v_3$};
  \node[fill=orange!18] (v4) at (-126:1.45){$v_4$};
  \node[fill=orange!18] (v5) at (162:1.45) {$v_5$};
  \foreach \i/\j in {1/2,2/3,3/4,4/5,5/1}{\draw[thick] (v\i)--(v\j);}
  \node[draw=none,rectangle,align=left,font=\small] at (4.35,0.55)
       {$S=\{v_1,v_3\}\in\Omega(C_5)$\\$N(S)=\{v_2,v_4,v_5\}$};
  \node[draw=none,rectangle,align=left,font=\small] at (4.35,-0.45)
       {$\alpha(C_5)=2$, $n(C_5)=5$\\$|N(S)|=3<2|S|=4$};
  \node[draw=none,rectangle,align=left,font=\scriptsize] at (4.35,-1.35)
       {blue: the independent set $S$\\orange: its open neighborhood};
\end{tikzpicture}
\caption{The graph $C_5$ is a connected $\W_2$ graph, but it is not $2$-quasi-regularizable.  The obstruction is a maximum independent set, and $5<3\cdot2$.}
\label{fig:C5}
\end{figure}

The paper is organized as follows.  Section~\ref{sec:preliminaries} fixes the notation and records the standard tools on independence polynomials, quasi-regularizability and \(\W_2\) graphs.  Section~\ref{sec:coefficients} proves the coefficient criteria used to answer Problem 4.1.  Section~\ref{sec:local} proves the local expansion theorem for connected \(\W_2\) graphs, and Section~\ref{sec:conjecture} applies it to Conjecture 4.2.  Section~\ref{sec:consequences} derives the corresponding log-concavity and unimodality consequences, and the last section closes with brief remarks.

\section{Preliminaries}\label{sec:preliminaries}

All graphs considered in this paper are finite, undirected, and simple.  For undefined graph-theoretic terminology we refer the reader to Diestel~\cite{Diestel}; for matching terminology we refer to Lov\'asz and Plummer~\cite{LovaszPlummer}.

We write $V(G)$ and $E(G)$ for the vertex set and the edge set, $n(G)=|V(G)|$, and $\alpha(G)$ for the independence number.  The family of independent sets is denoted by $\Ind(G)$, and the family of maximum independent sets by $\Omega(G)$.  For $A\subseteq V(G)$, the open neighborhood is
\[
        N_G(A)=\{v\in V(G)-A:\ uv\in E(G)\text{ for some }u\in A\},
\]
with $N_G[A]=A\cup N_G(A)$.  When $A$ is independent, the localization of $G$ at $A$ is
\[
        G_A=G-N_G[A].
\]
For $A=\{v\}$ we write $G_v$.  Figure~\ref{fig:localization-example} illustrates the localization operation.

\begin{figure}[t]
\centering
\begin{tikzpicture}[scale=0.95,font=\small,
  every node/.style={circle,draw,minimum size=7mm,inner sep=0pt}]
  \node[fill=blue!16] (a1) at (0,0.9) {$a_1$};
  \node[fill=blue!16] (a2) at (0,-0.9) {$a_2$};
  \node[fill=orange!20] (n1) at (1.7,1.45) {$u_1$};
  \node[fill=orange!20] (n2) at (1.7,0.15) {$u_2$};
  \node[fill=orange!20] (n3) at (1.7,-1.15) {$u_3$};
  \node[fill=green!12] (h1) at (3.4,0.85) {$h_1$};
  \node[fill=green!12] (h2) at (4.7,0.1) {$h_2$};
  \node[fill=green!12] (h3) at (3.5,-0.95) {$h_3$};
  \draw[thick] (a1)--(n1) (a1)--(n2) (a2)--(n2) (a2)--(n3);
  \draw[thick] (n1)--(h1) (n2)--(h1) (n2)--(h2) (n3)--(h3) (h1)--(h2) (h2)--(h3);
  \node[draw=none,rectangle,align=center] at (2.35,-2.0)
       {$A=\{a_1,a_2\}$\qquad $N_G(A)=\{u_1,u_2,u_3\}$};
  \node[draw=none,rectangle,font=\scriptsize,align=left] at (5.45,1.55)
       {blue: $A$\\orange: $N_G(A)$\\green: vertices of $G_A$};
  \node[draw=none,rectangle,font=\small] (galab) at (4.5,-0.8) {$G_A$};

\end{tikzpicture}
\caption{Localization at an independent set $A$.  The graph $G_A$ is obtained by deleting the closed neighborhood $N_G[A]=A\cup N_G(A)$ and keeping only the remaining induced subgraph.}
\label{fig:localization-example}
\end{figure}

\begin{definition}[$\W_p$ graphs]
Let $p\ge1$.  A graph $G$ belongs to $\W_p$ if $n(G)\ge p$ and, for every $p$ pairwise disjoint independent sets $A_1,\ldots,A_p$, there exist $p$ pairwise disjoint maximum independent sets $S_1,\ldots,S_p\in\Omega(G)$ such that $A_i\subseteq S_i$ for every $i$.
\end{definition}

\begin{definition}[Quasi-regularizability]
Let $\lambda>0$.  A graph $G$ is $\lambda$-quasi-regularizable if
\[
        \lambda |A|\le |N_G(A)|
\]
for every independent set $A$ of $G$.  The case $\lambda=1$ is the usual notion of quasi-regularizability due to Berge \cite{Berge}.
\end{definition}

\begin{definition}[Independence polynomial, log-concavity, unimodality]
Let
\[
        I(G;x)=\sum_{k=0}^{\alpha(G)}s_kx^k
        =s_0+s_1x+\cdots+s_{\alpha(G)}x^{\alpha(G)}
\]
be the independence polynomial of $G$, where $s_k$ is the number of independent sets of cardinality $k$.  Following \cite{HoangLevitMandrescuPham}, we say that $I(G;x)$ is
\begin{itemize}[leftmargin=2em]
\item \emph{log-concave} if
\[
        s_k^2\ge s_{k-1}s_{k+1}
        \qquad(1\le k\le\alpha(G)-1),
\]
\item \emph{unimodal} if there exists an index $0\le t\le\alpha(G)$ such that
\[
        s_0\le\cdots\le s_{t-1}\le s_t\ge s_{t+1}\ge\cdots\ge s_{\alpha(G)}.
\]
\end{itemize}
\end{definition}

\begin{lemma}[Keilson--Gerber product lemma]\label{lem:product}
Let $P(x)$ and $Q(x)$ have nonnegative coefficients and no internal zero coefficients.  If $P(x)$ is log-concave and $Q(x)$ is unimodal, then $P(x)Q(x)$ is unimodal.  If both $P(x)$ and $Q(x)$ are log-concave, then $P(x)Q(x)$ is log-concave.
\end{lemma}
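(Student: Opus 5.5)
The plan is to work with coefficient sequences throughout. Write $P(x)=\sum_i p_ix^i$ and $Q(x)=\sum_j q_jx^j$, and let $c_k=\sum_i p_iq_{k-i}$ be the coefficients of $P(x)Q(x)$. For a nonnegative sequence with no internal zeros, I will use the following reformulation of log-concavity: the ratios $p_{i+1}/p_i$ are nonincreasing on the support of the sequence. Equivalently,
\[
p_ip_j\ge p_{i-1}p_{j+1}\qquad(i\le j),
\]
which says that all $2\times2$ minors of the Toeplitz matrix $T_P=(p_{k-l})_{k,l}$ are nonnegative. This reformulation is the one place where the hypothesis of no internal zeros is needed: the one-step inequality $p_k^2\ge p_{k-1}p_{k+1}$ is chained through consecutive ratios, and that chaining requires the ratios to be defined on a contiguous support.

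\textbf{Log-concave times log-concave.} The convolution matrix satisfies $T_{PQ}=T_PT_Q$. By the Cauchy--Binet formula, every $2\times2$ minor of $T_{PQ}$ is a sum of products of $2\times2$ minors of $T_P$ and of $T_Q$, so all of them are nonnegative. Specializing to the minor with rows $k,k+1$ and columns $0,1$ gives $c_k^2-c_{k-1}c_{k+1}\ge0$. The product also has no internal zeros, since its support is the sumset of two intervals. Hence $PQ$ is log-concave.

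\textbf{Log-concave times unimodal.} Here I pass to differences. Put $d_r=q_r-q_{r-1}$. Unimodality of $Q$ means there is an index $m$ with $d_r\ge0$ for $r\le m$ and $d_r\le0$ for $r>m$. Then
\[
\Delta_k:=c_k-c_{k-1}=\sum_r d_rp_{k-r},
\]
and it suffices to show that $(\Delta_k)$ changes sign at most once, and only from nonnegative to nonpositive. Concretely, I will show that $\Delta_k\le0$ implies $\Delta_{k+1}\le0$. Set $\rho=p_{k+1-m}/p_{k-m}$.

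1. Because the ratio $p_{t+1}/p_t$ is nonincreasing in $t$, and $t=k-r$ decreases as $r$ increases, we get $p_{k+1-r}\le\rho\,p_{k-r}$ for $r\le m$ and $p_{k+1-r}\ge\rho\,p_{k-r}$ for $r>m$.
2. Multiply each inequality by $d_r$. For $r\le m$ we have $d_r\ge0$; for $r>m$ we have $d_r\le0$, which reverses the second inequality. In both cases $d_rp_{k+1-r}\le\rho\,d_rp_{k-r}$.
3. Summing over $r$ gives $\Delta_{k+1}\le\rho\Delta_k\le0$.

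This is the variation-diminishing property of a PF$_2$ kernel, specialized to one sign change, and it yields unimodality of $PQ$.

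\textbf{Main obstacle.} The delicate step is making the choice of $\rho$ legitimate when $p_{k-m}=0$ or $p_{k+1-m}=0$, that is, near the ends of the support of $P$. I would handle this by a short case split.

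1. If $k-m$ lies below the support of $P$, then for every $r>m$ both $p_{k-r}$ and $p_{k+1-r}$ vanish except possibly $p_{k+1-r}$ at the left endpoint. In this regime every term of $\Delta_k$ is $\ge0$, so the hypothesis $\Delta_k\le0$ forces all relevant terms to be zero, and one checks $\Delta_{k+1}\le0$ directly.
2. If $k+1-m$ lies above the support, then take $\rho=0$. Because the support is an interval, the inequalities in step 1 still hold.

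Alternatively, one can avoid the case split altogether: replace $P$ by the perturbation $p_i+\varepsilon\theta^i$ restricted to its support, which keeps the sequence log-concave, and then let $\varepsilon\to0$, since the non-strict inequalities pass to the limit.
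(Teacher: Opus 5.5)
The paper gives no proof of this lemma; it cites Keilson--Gerber and the corrected Levit--Mandrescu formulation. So your argument has to stand on its own. Your log-concave $\times$ log-concave part does: PF$_2$ of the Toeplitz matrices plus Cauchy--Binet is correct.

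The unimodal part has a gap: it aims at a false intermediate statement. The implication ``$\Delta_k\le0\Rightarrow\Delta_{k+1}\le0$'' fails whenever the product has a plateau followed by a rise. Take $P(x)=1$ and $Q(x)=1+x+2x^2$, so $c=(1,1,2)$, $\Delta_1=0$ and $\Delta_2=1$. Here $d=(1,0,1,-2)$ forces $m=2$, and $k=1$ gives $k-m=-1$, below the support of $P$. This is exactly your first case, where you claim ``one checks $\Delta_{k+1}\le0$ directly''. In that case the terms of $\Delta_{k+1}$ with $r>m$ also vanish, since $k+1-r\le k-m$ lies below the support. Hence $\Delta_{k+1}=\sum_{r\le m}d_rp_{k+1-r}\ge0$, the opposite sign.

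The perturbation alternative cannot rescue this. The counterexample survives it verbatim: $P=1$ just becomes $1+\varepsilon$. Also, adding $\varepsilon\theta^i$ does not preserve log-concavity in general; for $(1,2,4)$ the middle inequality of the perturbed sequence reads $-\varepsilon(\theta-2)^2\ge0$.

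The fix is short, but it changes the target. Let $\ell=\min\{i:p_i>0\}$.
\begin{itemize}
\item Your first case is exactly $k<m+\ell$, an initial segment of indices. There every term $d_rp_{k-r}$ is nonnegative ($d_r\ge0$ for $r\le m$, and $p_{k-r}=0$ for $r>m$), so $\Delta_k\ge0$.
\item For $k\ge m+\ell$, either $p_{k-m}>0$, and your $\rho$-argument gives $\Delta_{k+1}\le\rho\Delta_k$ with $\rho\ge0$; or $k-m$ lies above the support, and then $\Delta_{k+1}=\sum_{r>m}d_rp_{k+1-r}\le0$ outright.
\end{itemize}
Consequently, if $\Delta_i<0$ then $i\ge m+\ell$, and induction gives $\Delta_j\le0$ for all $j>i$. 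So $(c_k)$ has no strict decrease followed by a strict increase, which is precisely unimodality. With the target restated this way, instead of the one-step implication for all $k$, your variation-diminishing argument is complete.
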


The lemma is the standard polynomial form of a convolution theorem for nonnegative sequences.  Keilson and Gerber established the classical strong-unimodality principle behind the first assertion \cite{KeilsonGerber}; in the present nonnegative setting one must include the no-internal-zero condition, as in the corrected formulation of Levit and Mandrescu \cite[Theorem~1.2 and Corollary~1.3]{LevitMandrescuOperations}.  The compact independence-polynomial wording also appears explicitly in \cite[Theorem~1]{LevitMandrescuVeryWellCoveredLogConcave} and is the form quoted by Hoang--Levit--Mandrescu--Pham \cite[Lemma~1.4]{HoangLevitMandrescuPham}.  Since independence polynomials have strictly positive coefficients from degree $0$ through degree $\alpha(G)$, the hypothesis is automatic in all applications below.

\begin{lemma}[Standard facts on $\W_2$ graphs]\label{lem:w2facts}
Let $G\in\W_2$.
\begin{enumerate}[label=\textup{(\roman*)}]
\item $G$ has no isolated vertices.
\item If $A\in\Ind(G)$ and $|A|<\alpha(G)$, then $G_A\in\W_2$ and $\alpha(G_A)=\alpha(G)-|A|$.
\item A graph is in $\W_2$ if and only if each of its connected components is in $\W_2$.
\item If $A\in\Ind(G)$ is not maximum and $v\notin A$, then there is $S\in\Omega(G)$ such that $A\subseteq S$ and $v\notin S$.
\item If $C$ is connected, $C\in\W_2$, and $C\ne K_2$, then $C$ has no leaf.  In particular, if $C$ is connected, $C\in\W_2$, and $\alpha(C)>1$, then $\delta(C)\ge2$.
\end{enumerate}
\end{lemma}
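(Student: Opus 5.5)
We prove the five items in the order (iv), (i), (ii), (iii), (v). Each one is read off directly from the definition of $\W_2$ with $p=2$. The basic move is always the same: pick a suitable pair of disjoint independent sets, extend them to disjoint maximum independent sets $S_1,S_2$, and read off the consequence.

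\emph{Items (iv) and (i).} For (iv), apply the defining property to the disjoint pair $A_1=A$, $A_2=\{v\}$ (disjoint since $v\notin A$). This gives disjoint $S_1,S_2\in\Omega(G)$ with $A\subseteq S_1$ and $v\in S_2$, so $v\notin S_1$; take $S=S_1$. Non-maximality of $A$ is not even needed. For (i), suppose $v$ is isolated. Since $n(G)\ge2$, take $A_1=\{v\}$ and $A_2=\{w\}$ for some $w\ne v$. Then $v\in S_1$ and $v\notin S_2$. But every maximum independent set contains every isolated vertex, since otherwise adding $v$ would enlarge it. This contradicts $v\notin S_2$.

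\emph{Item (ii).} Let $A$ be independent with $|A|<\alpha(G)$. The first step is $\alpha(G_A)=\alpha(G)-|A|$. Since $\W_2\subseteq\W_1$, $A$ extends to some $S\in\Omega(G)$, and $S\setminus A\subseteq V(G_A)$ is independent, so $\alpha(G_A)\ge\alpha(G)-|A|$. Conversely, for any independent $B$ of $G_A$, the set $A\cup B$ is independent in $G$, giving the reverse inequality. It also follows that $n(G_A)\ge\alpha(G_A)\ge1$.

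To get $n(G_A)\ge2$, note that $G_A$ has a vertex; call it $v$. Use (iv) to extend $A$ to $S\in\Omega(G)$ with $v\notin S$. Then $S\setminus A$ is a nonempty independent set of $G_A$ avoiding $v$, so $G_A$ has a second vertex.

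Now take disjoint independent sets $B_1,B_2$ of $G_A$. The trick is to apply $\W_2$ in $G$ to the pair $A\cup B_1$ and $B_2$, which are disjoint and independent. This yields disjoint maximum sets $S_1\supseteq A\cup B_1$ and $S_2\supseteq B_2$. Then $T_1=S_1\setminus A$ is maximum in $G_A$, but $S_2$ need not lie inside $G_A$. This is the main obstacle.

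The fix I would try is symmetric. Apply $\W_2$ again to the pair $B_1$ and $A\cup B_2$, obtaining $S_2'\supseteq A\cup B_2$ that is disjoint from some $S_1'\supseteq B_1$. Then $S_2'\setminus A$ is maximum in $G_A$ and contains $B_2$, but it may meet $T_1$.

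Better is to use a single application where both sets contain the relevant parts and $A$ is shared. This is impossible with disjointness, so instead I would argue via the equivalence $\W_2=$ $1$-well-covered, quoted from \cite{StaplesJGT,LevitMandrescuRevisited}. $1$-well-coveredness is clearly inherited by localizations: removing $N[A]$ and then a vertex is the same as removing that vertex and then $N[A]$. Combined with (i) applied in $G_A$ this gives the claim, because $G_A$ has no isolated vertices. Indeed, an isolated vertex $u$ of $G_A$ would lie in every maximum set of $G$ containing $A$; but $A\cup\{u\}$ is non-maximum or maximum, and in either case (iv) applied to $A$ and $u$ gives a contradiction.

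\emph{Items (iii) and (v).} For (iii), maximum independent sets of a disjoint union are exactly unions of maximum independent sets of the components, so extension problems split componentwise. One direction also needs each component to have $n\ge2$, which follows from (i).

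For (v), suppose $C\ne K_2$ is connected and has a leaf $\ell$ with neighbor $u$. Since $C$ is connected and not $K_2$, $u$ has another neighbor $w$. Apply $\W_2$ to the pair $\{\ell\}$ and $\{w\}$, obtaining disjoint $S_1\ni\ell$ and $S_2\ni w$. Then $\ell\notin S_2$, and $u\notin S_2$ because $uw\in E(C)$. Hence $S_2\cup\{\ell\}$ is independent, contradicting the maximality of $S_2$. Finally, if $\alpha(C)>1$ then $C\ne K_2$, so $C$ has no leaves, and by (i) it has no isolated vertices; hence $\delta(C)\ge2$.
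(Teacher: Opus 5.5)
Your proof is correct. For (i), (iii), (iv) and (v) it follows the paper's argument. The differences are cosmetic: the paper uses the pair $\varnothing,\varnothing$ in (i), and routes (v) through (iv), which amounts to the same application of the $\W_2$ property to $\{z\}$ and $\{x\}$ that you make directly.

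The genuine difference is in (ii). The paper proves $\alpha(G_A)=\alpha(G)-|A|$ exactly as you do, but for $G_A\in\W_2$ it simply cites the localization theorem (Levit--Mandrescu, Corollary~2.3(i); Hoang--Levit--Mandrescu--Pham, Lemma~2.7(iii)). You instead derive closure under localization from a different cited result: the characterization of $\W_2$ as the $1$-well-covered graphs without isolated vertices. To this you add two checks:
\begin{itemize}
\item $G_A-v=(G-v)_A$, and localizations of well-covered graphs are well-covered;
\item $G_A$ has no isolated vertices, by the avoidance property (iv).
\end{itemize}
The paper's route costs one citation. Yours shows exactly where the $\W_2$ hypothesis enters, namely that avoidance rules out isolated vertices in $G_A$. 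It relies only on the characterization theorem, which the paper already invokes in its introduction.

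When you write it up, several things need cleaning.
\begin{itemize}
\item Drop the two abandoned direct attempts.
\item Repair the sentence ``Combined with (i) applied in $G_A$.'' Read literally it is circular, since (i) presupposes $G_A\in\W_2$. What you actually use, and what your ``Indeed'' sentence correctly proves, is that $G_A$ has no isolated vertices. That is a hypothesis of the characterization, not a consequence of (i).
\item Drop the case split on whether $A\cup\{u\}$ is maximum. Property (iv) is applied to $A$, which is non-maximum because $|A|<\alpha(G)$.
\item Replace ``clearly inherited'' by the fact it rests on. If $I$ is maximal independent in $G_A$, then $A\cup I$ is maximal independent in $G$. The same holds with $G-v$ in place of $G$, since $v\in V(G_A)$ gives $v\notin N_G[A]$ and hence $N_{G-v}[A]=N_G[A]$. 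So all maximal independent sets of $G_A$ and of $(G-v)_A=G_A-v$ have the same size.
\item Your separate argument for $n(G_A)\ge 2$ can go: it is subsumed by the nonempty graph $G_A$ having no isolated vertices.
\end{itemize}
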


\begin{proof}
For (i), if $z$ were isolated, then every maximum independent set would contain $z$, contradicting the existence of two disjoint maximum independent sets in the definition of $\W_2$ applied to the two empty independent sets.  The localization statement (ii) is the standard localization theorem for $\W_2$ graphs; see \cite[Corollary~2.3(i)]{LevitMandrescuRevisited} and also \cite[Lemma~2.7(iii)]{HoangLevitMandrescuPham}.  The identity for the independence number follows because a maximum independent set of $G_A$, together with $A$, is independent in $G$, and conversely any maximum independent set of $G$ containing $A$ leaves a maximum independent set in $G_A$.

For (iii), one may either check the defining extension property component by component, or quote \cite[Theorem~2.6]{HoangLevitMandrescuPham}.  Maximum independent sets in a disjoint union are exactly unions of maximum independent sets of the components, and restricting the defining property to one component gives the converse.

Assertion (iv) is the avoidance characterization \cite[Theorem~2.2(vii)]{LevitMandrescuRevisited}; it also follows directly from the definition of $\W_2$: apply it to the two disjoint independent sets $A$ and $\{v\}$.  We obtain disjoint maximum independent sets $S,T$ with $A\subseteq S$ and $v\in T$, hence $v\notin S$.

Finally, (v) is \cite[Corollary~2.3(ii)]{LevitMandrescuRevisited}.  For completeness we indicate the short argument.  Suppose that a connected $C\in\W_2$, $C\ne K_2$, has a leaf $x$ with unique neighbor $y$.  Since $C\ne K_2$, the vertex $y$ has another neighbor $z$.  By (iv), applied to the non-maximum independent set $\{z\}$ and the vertex $x$, there is a maximum independent set $S$ with $z\in S$ and $x\notin S$.  Then $y\notin S$, and hence $S\cup\{x\}$ is independent, contradicting the maximality of $S$.
\end{proof}

\begin{lemma}[Known coefficient inequalities]\label{lem:coef}
Let $G$ have order $n$, independence number $\alpha$, and independence polynomial $I(G;x)=\sum_{k=0}^{\alpha}s_kx^k$.
\begin{enumerate}[label=\textup{(\roman*)}]
\item If $G$ is $\lambda$-quasi-regularizable, then
\[
        (k+1)s_{k+1}\le \bigl(n-(\lambda+1)k\bigr)s_k,
        \qquad 0\le k\le \alpha-1.
\]
This is due to Levit and Mandrescu \cite[Theorem~2.1]{LevitMandrescuRoller}.
\item If $G$ is connected and $G\in\W_p$, then
\[
        p(\alpha-k)s_k\le(k+1)s_{k+1},
        \qquad 1\le k\le \alpha-1.
\]
This is due to Hoang, Levit, Mandrescu and Pham \cite[Theorem~2.6]{HoangLevitMandrescuPhamCliqueCorona}.
\end{enumerate}
\end{lemma}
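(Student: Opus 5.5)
Both parts follow from one double count. Let $\mathcal P_k$ be the set of pairs $(A,v)$ with $A\in\Ind(G)$, $|A|=k$, $v\notin A$, and $A\cup\{v\}$ independent. Each independent set $B$ of size $k+1$ arises from exactly $k+1$ such pairs, namely $(B-\{v\},v)$ for $v\in B$. Hence
\[
(k+1)s_{k+1}=|\mathcal P_k|=\sum_{A\in\Ind(G),\,|A|=k}\bigl|V(G)-N_G[A]\bigr|=\sum_{A\in\Ind(G),\,|A|=k} n(G_A).
\]
Both inequalities then reduce to bounding $n(G_A)$ for every independent set $A$ of size $k$, from above in (i) and from below in (ii).

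For (i), I use $n(G_A)=n-|A|-|N_G(A)|$. Applying $\lambda$-quasi-regularizability to $A$ gives $|N_G(A)|\ge\lambda k$. So each summand is at most $n-(\lambda+1)k$, and summing over the $s_k$ sets gives the claim. No hypothesis beyond quasi-regularizability is used, and the bound holds for every $0\le k\le\alpha-1$.

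For (ii), I want $n(G_A)\ge p(\alpha-k)$ for every independent $A$ with $|A|=k\le\alpha-1$. The plan has three steps.
\begin{enumerate}[label=\textup{(\alph*)}]
\item A $\W_p$ graph with $p\ge1$ is well-covered. Apply the definition to $A$ together with $p-1$ empty sets (or, if $p>1$, to $A$ and $p-1$ singletons outside $A$). Then $A$ extends to a maximum independent set, so $A$ is not maximal. Hence $G_A$ is nonempty and $\alpha(G_A)=\alpha-k$, by the same argument as in Lemma~\ref{lem:w2facts}(ii).
\item Invoke the localization theorem for $\W_p$ (Staples): if $A$ is independent and not maximum, then $G_A\in\W_p$. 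This is the $p$-analogue of Lemma~\ref{lem:w2facts}(ii).
\item Apply the definition of $\W_p$ to $G_A$ with $p$ empty sets. This yields $p$ pairwise disjoint maximum independent sets of $G_A$, each of size $\alpha-k$, so $n(G_A)\ge p(\alpha-k)$.
\end{enumerate}
Summing over the $s_k$ sets $A$ gives $(k+1)s_{k+1}\ge p(\alpha-k)s_k$.

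The main obstacle is step (b), the localization property for general $p$. If I cannot simply cite it, I would prove it directly. Take $p$ disjoint independent sets $B_1,\dots,B_p$ of $G_A$ and apply the $\W_p$ property of $G$ to the sets $A\cup B_1,B_2,\dots,B_p$. This does not immediately give disjoint maximum independent sets of $G_A$ containing every $B_i$, because the extensions of $B_2,\dots,B_p$ may use vertices of $N_G[A]$. For that reason I expect to rely on the cited literature, as the paper does for $p=2$. The connectedness hypothesis is not needed in this argument; it is only there to exclude degenerate cases such as $K_1$, where $\W_p$ fails for $p\ge2$.
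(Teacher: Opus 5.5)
Your proof is correct. It differs from the paper only in that the paper gives no argument for this lemma: it imports (i) from Levit--Mandrescu and (ii) from Hoang--Levit--Mandrescu--Pham.

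Your double count $(k+1)s_{k+1}=\sum_{|A|=k}n(G_A)$ is the standard mechanism behind both inequalities, and it makes the lemma self-contained. It also shows that (ii) holds for all $0\le k\le\alpha-1$ and never uses connectedness. One side remark is off: $K_1$ is connected, so connectedness is not what excludes it. $K_1\notin\W_p$ for $p\ge2$ simply because $n(G)\ge p$ fails.

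The obstacle you flag in step (b) disappears if you build the sets needed in step (c) one at a time:
\begin{enumerate}
\item Suppose $T_1,\dots,T_{j-1}\subseteq V(G_A)$ are pairwise disjoint independent sets of size $\alpha-k$.
\item Apply the $\W_p$ property of $G$ to the $p$ pairwise disjoint independent sets $T_1,\dots,T_{j-1},A,\varnothing,\dots,\varnothing$. This gives pairwise disjoint maximum independent sets with $S_i\supseteq T_i$ for $i<j$ and $S_j\supseteq A$.
\item Set $T_j=S_j-A$. It avoids $N_G[A]$ because $S_j$ is independent and contains $A$. It has size $\alpha-k$. It is disjoint from each $T_i$, since $T_i\subseteq S_i$ and $S_i\cap S_j=\varnothing$.
\end{enumerate}
After $p$ rounds you have $n(G_A)\ge p(\alpha-k)$ with no citation at all.

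The same iteration proves the full localization theorem $G_A\in\W_p$. At round $j$, use $A\cup B_j$ in place of $A$ and $B_{j+1},\dots,B_p$ in place of the trailing empty sets. This addresses exactly your worry: you keep only the extension of the set carrying $A$, and freeze it before moving on.
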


These two inequalities are also recorded together as \cite[Lemma~3.2]{HoangLevitMandrescuPham}; we quote them here because they are the coefficient input for the criteria below.

\begin{corollary}\label{cor:components}
	Let $G$ be a graph with connected components $H_1,\ldots,H_q$.  If each
	$I(H_i;x)$ is log-concave, then $I(G;x)$ is log-concave.  If each
	$I(H_i;x)$ is unimodal and all but possibly one of them are log-concave,
	then $I(G;x)$ is unimodal.
\end{corollary}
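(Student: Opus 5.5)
The plan is to reduce both assertions to Lemma~\ref{lem:product} by observing that $G$ is the disjoint union of its components, so
\[
        I(G;x)=\prod_{i=1}^{q} I(H_i;x).
\]
This identity holds because an independent set of $G$ is exactly a union of independent sets, one chosen in each component. Comparing coefficients of $x^k$ on both sides is then just counting. I will state this factorization first, as the structural input for everything that follows.

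For log-concavity I will induct on $q$. When $q=1$ there is nothing to prove. For the inductive step, set $P_{j}(x)=\prod_{i\le j}I(H_i;x)$. Every independence polynomial has strictly positive coefficients in degrees $0,\dots,\alpha$, so it has no internal zeros, and the same holds for any product of such polynomials, since the product also has positive coefficients from degree $0$ up to the sum of the degrees. Lemma~\ref{lem:product} then shows that if $P_j$ is log-concave and $I(H_{j+1};x)$ is log-concave, the product $P_{j+1}$ is log-concave.

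For unimodality, I will reorder the components so that $H_1,\dots,H_{q-1}$ are log-concave, and $H_q$ is the one that is possibly only unimodal. By the previous paragraph, $P_{q-1}$ is log-concave; in the degenerate case $q=1$ I take $P_0=1$, which is trivially log-concave. Applying the first assertion of Lemma~\ref{lem:product} to $P_{q-1}$ (log-concave) and $I(H_q;x)$ (unimodal) then gives that $I(G;x)$ is unimodal.

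I expect no real obstacle here. The only point requiring care is the no-internal-zero hypothesis in Lemma~\ref{lem:product}, and the positivity of the coefficients of independence polynomials handles it.
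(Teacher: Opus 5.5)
Your proposal is correct and follows the paper's proof: factor $I(G;x)=\prod_{i=1}^q I(H_i;x)$ and apply Lemma~\ref{lem:product}. Your induction on $q$, the reordering so that the possibly non-log-concave factor comes last, and the positivity check for the no-internal-zero hypothesis spell out the details the paper leaves implicit.
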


\begin{proof}
	The independence polynomial is multiplicative over disjoint unions:
	\[
	I(G;x)=\prod_{i=1}^q I(H_i;x).
	\]
	The assertions follow from Lemma~\ref{lem:product}.
\end{proof}

\section{Coefficient criteria for Problem 4.1}\label{sec:coefficients}

The following criterion is the coefficient engine behind all log-concavity applications in this paper.  It separates the enumerative step from the structural question of when a $\W_p$ graph is $\lambda$-quasi-regularizable.

\begin{theorem}[General log-concavity criterion]\label{thm:LC-general}
Let $G$ be a connected graph in $\W_p$, with $n=n(G)$ and $\alpha=\alpha(G)$.  Suppose that $G$ is $\lambda$-quasi-regularizable.  If
\[
        \Phi_{p,\lambda,n,\alpha}(k)
        =(k+1)p(\alpha-k+1)-k\bigl(n-(\lambda+1)k\bigr)\ge0
\]
for every $1\le k\le\alpha-1$, then $I(G;x)$ is log-concave.
\end{theorem}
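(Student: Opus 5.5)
We chain the two inequalities of Lemma~\ref{lem:coef} across the index $k$. Fix $1\le k\le\alpha-1$; we want $s_k^2\ge s_{k-1}s_{k+1}$. All coefficients $s_0,\ldots,s_\alpha$ are strictly positive, so we may freely divide by them.

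\emph{Upper bound for the forward ratio.} Apply Lemma~\ref{lem:coef}(i) at index $k$ (valid since $0\le k\le\alpha-1$). This gives
\[
(k+1)s_{k+1}\le\bigl(n-(\lambda+1)k\bigr)s_k .
\]
In particular $n-(\lambda+1)k>0$, because the left side is positive.

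\emph{Lower bound for the backward ratio.} For $k\ge2$, apply Lemma~\ref{lem:coef}(ii) at index $k-1$, which lies in the range $1\le k-1\le\alpha-1$. This gives
\[
p(\alpha-k+1)s_{k-1}\le k\,s_k .
\]
The case $k=1$ is outside the stated range of (ii), so we treat it directly. There $s_{k-1}=s_0=1$ and $s_1=n$, and the required inequality $p\alpha\cdot1\le 1\cdot n$ is just $n\ge p\alpha$. This holds for a $\W_p$ graph, because $G$ contains $p$ pairwise disjoint maximum independent sets (extend $p$ empty sets). Hence
\[
p(\alpha-k+1)s_{k-1}\le k s_k \qquad\text{for every } 1\le k\le\alpha-1.
\]

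\emph{Combining.} Multiply the two inequalities, all quantities being nonnegative:
\[
p(\alpha-k+1)(k+1)\,s_{k-1}s_{k+1}\le k\bigl(n-(\lambda+1)k\bigr)s_k^2 .
\]
The hypothesis $\Phi_{p,\lambda,n,\alpha}(k)\ge0$ states exactly that
\[
k\bigl(n-(\lambda+1)k\bigr)\le (k+1)p(\alpha-k+1),
\]
and the right-hand coefficient is positive. Therefore
\[
(k+1)p(\alpha-k+1)\,s_{k-1}s_{k+1}\le (k+1)p(\alpha-k+1)\,s_k^2 .
\]
Dividing by $(k+1)p(\alpha-k+1)>0$ gives $s_{k-1}s_{k+1}\le s_k^2$, which is the required log-concavity at $k$.

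The only delicate step is the boundary case $k=1$, where Lemma~\ref{lem:coef}(ii) is not available and one must use $n\ge p\alpha$ directly. Everything else is bookkeeping of signs so that multiplying and dividing the inequalities is legitimate.
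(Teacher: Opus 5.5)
Your proof is correct and follows the paper's argument: apply Lemma~\ref{lem:coef}(i) at index $k$ and Lemma~\ref{lem:coef}(ii) at index $k-1$, multiply the two inequalities, and use $\Phi_{p,\lambda,n,\alpha}(k)\ge0$. Your separate treatment of $k=1$ via $n\ge p\alpha$ fills a small gap: the paper silently applies (ii) at index $0$, which is outside its stated range $1\le k\le\alpha-1$.
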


\begin{proof}
Apply Lemma~\ref{lem:coef}(i) with index $k$ and Lemma~\ref{lem:coef}(ii) with index $k-1$.  For $1\le k\le\alpha-1$,
\[
        s_{k+1}\le \frac{n-(\lambda+1)k}{k+1}s_k,
        \qquad
        s_{k-1}\le \frac{k}{p(\alpha-k+1)}s_k.
\]
Multiplying gives
\[
        s_{k-1}s_{k+1}
        \le
        \frac{k\bigl(n-(\lambda+1)k\bigr)}{(k+1)p(\alpha-k+1)}s_k^2.
\]
The hypothesis is exactly the assertion that the rational factor is at most $1$.
\end{proof}

\begin{corollary}[The $p$-quasi-regularizable specialization]\label{cor:pquasi}
Let $G$ be connected, $G\in\W_p$, with $n=n(G)$ and $\alpha=\alpha(G)$.  Suppose that $G$ is $p$-quasi-regularizable.  If
\[
        f_{p,n,\alpha}(k)=k^2-(n-p\alpha)k+p\alpha+p\ge0
\]
for every $1\le k\le\alpha-1$, then $I(G;x)$ is log-concave.
\end{corollary}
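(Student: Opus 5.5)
Corollary~\ref{cor:pquasi} is the special case $\lambda=p$ of Theorem~\ref{thm:LC-general}, so the plan is simply to substitute $\lambda=p$ into $\Phi_{p,\lambda,n,\alpha}(k)$ and check that the result is a positive multiple of $f_{p,n,\alpha}(k)$. Then the hypothesis $f_{p,n,\alpha}(k)\ge0$ for $1\le k\le\alpha-1$ is the same as $\Phi_{p,p,n,\alpha}(k)\ge0$ on that range, and Theorem~\ref{thm:LC-general} gives log-concavity of $I(G;x)$ directly. The other hypotheses (connected, $G\in\W_p$, $p$-quasi-regularizable) are exactly those of the theorem with $\lambda=p$.

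For the computation, expand
\[
\Phi_{p,p,n,\alpha}(k)=(k+1)p(\alpha-k+1)-k\bigl(n-(p+1)k\bigr).
\]
The first term is $p\bigl(k\alpha-k^2+k+\alpha+1\bigr)$ and the second is $-nk+(p+1)k^2$. Adding them, the quadratic part is $-pk^2+(p+1)k^2=k^2$. The linear part is $(p\alpha+p-n)k$, and the constant part is $p\alpha+p$. Hence
\[
\Phi_{p,p,n,\alpha}(k)=k^2-(n-p\alpha-p)k+p\alpha+p .
\]

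This is the one place that needs care, since it does not coincide with $f_{p,n,\alpha}(k)=k^2-(n-p\alpha)k+p\alpha+p$. The difference is $\Phi_{p,p,n,\alpha}(k)-f_{p,n,\alpha}(k)=pk$, which is positive for $k\ge1$. So $f_{p,n,\alpha}(k)\ge0$ implies $\Phi_{p,p,n,\alpha}(k)\ge pk>0$ on the whole range. The corollary therefore follows from Theorem~\ref{thm:LC-general}, and its stated hypothesis is in fact slightly stronger than what the theorem needs. I would double-check the linear coefficient against the discriminant range of \cite[Theorem~3.3]{HoangLevitMandrescuPham}. Whichever normalization is correct there, the implication only goes in the favourable direction, so the proof is unaffected.
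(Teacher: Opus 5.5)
Your route is the paper's: substitute $\lambda=p$ into Theorem~\ref{thm:LC-general}. However, the one computation the proof rests on is wrong. When you expanded the first term you dropped a $-k$:
\[
(k+1)(\alpha-k+1)=k(\alpha-k+1)+(\alpha-k+1)=k\alpha-k^2+\alpha+1,
\]
not $k\alpha-k^2+k+\alpha+1$. With the correct expansion the only linear terms are $pk\alpha-nk$, and one gets
\[
\Phi_{p,p,n,\alpha}(k)=k^2-(n-p\alpha)k+p\alpha+p=f_{p,n,\alpha}(k)
\]
identically. This is exactly the paper's claim that the two inequalities are equivalent. As a check, take $p=1$, $\alpha=3$, $n=6$, $k=1$. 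Then $\Phi=2\cdot 3-1\cdot 4=2=f(1)$, whereas your formula gives $3$. So three of your claims are false: the identity $\Phi-f=pk$, the bound $\Phi\ge pk>0$, and the remark that the corollary's hypothesis is stronger than the theorem needs. The hypothesis is precisely the theorem's condition with $\lambda=p$.

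The statement still follows, but only because the true difference is $0$. Your hedge that ``the implication only goes in the favourable direction'' does not protect the argument. The direction is fixed by the sign of $\Phi-f$, which is exactly the quantity you miscomputed, and a slip of the opposite sign would have broken the proof. Replace the expansion with the correct one and the proof is complete: $f_{p,n,\alpha}(k)\ge0$ for $1\le k\le\alpha-1$ is literally $\Phi_{p,p,n,\alpha}(k)\ge0$ on that range, and Theorem~\ref{thm:LC-general} applies.
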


\begin{proof}
Put $\lambda=p$ in Theorem~\ref{thm:LC-general}.  The inequality
\[
        (k+1)p(\alpha-k+1)-k(n-(p+1)k)\ge0
\]
is equivalent to the displayed quadratic inequality.
\end{proof}

\begin{theorem}[Explicit interval form]\label{thm:interval}
Let $G$ be connected, $G\in\W_p$, with $n=n(G)$ and $\alpha=\alpha(G)\ge2$.  Suppose that $G$ is $p$-quasi-regularizable.  Then $I(G;x)$ is log-concave whenever either
\[
        (p+1)\alpha\le n\le p\alpha+2\sqrt{p\alpha+p},
        \qquad
        \frac{\alpha^2}{4(\alpha+1)}\le p,
\]
or
\[
        p\alpha+2\sqrt{p\alpha+p}<n\le
        \frac{(\alpha^2+1)p+(\alpha-1)^2}{\alpha-1},
        \qquad
        \frac{\alpha(\alpha-1)}{\alpha+1}\le p.
\]
\end{theorem}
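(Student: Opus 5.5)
The plan is to reduce everything to Corollary~\ref{cor:pquasi}. Since $G$ is connected, lies in $\W_p$ and is $p$-quasi-regularizable, it suffices to show that
\[
f(k)=f_{p,n,\alpha}(k)=k^2-mk+c\ge0\qquad(1\le k\le\alpha-1),
\qquad m=n-p\alpha,\quad c=p\alpha+p=p(\alpha+1)>0.
\]
Since $\alpha\ge2$, the range $[1,\alpha-1]$ is nonempty. I will view $f$ as a convex quadratic in a real variable $k$, with vertex at $m/2$ and discriminant $m^2-4c$, and treat the two hypotheses as the two cases $m\le2\sqrt c$ and $m>2\sqrt c$.

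\emph{First range.} The hypothesis $n\le p\alpha+2\sqrt{p\alpha+p}$ says exactly that $m\le2\sqrt c$. If $m\ge0$, this gives $m^2\le4c$. If $m<0$, then $f(k)>0$ for every $k\ge0$ trivially. In either case the discriminant is nonpositive or $f$ is positive on $k\ge0$, so $f\ge0$ on all of $[1,\alpha-1]$. The lower bound $n\ge(p+1)\alpha$ and the condition $\alpha^2\le4p(\alpha+1)$ are not used in this inequality. The side condition only guarantees that the stated interval for $n$ is nonempty, because $(p+1)\alpha\le p\alpha+2\sqrt{p(\alpha+1)}$ is equivalent to $\alpha^2\le4p(\alpha+1)$. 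I will remark this and move on.

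\emph{Second range.} Now $m>2\sqrt c$, so $f$ has two real roots $r_1<r_2$ with $r_1<m/2<r_2$. The plan has two steps.
\begin{enumerate}[label=\textup{(\alph*)}]
\item Show that the whole interval $[1,\alpha-1]$ lies to the left of the vertex. From $p\ge\alpha(\alpha-1)/(\alpha+1)$ we get $c=p(\alpha+1)\ge\alpha(\alpha-1)\ge(\alpha-1)^2$. Hence $m/2>\sqrt c\ge\alpha-1$. So $f$ is decreasing on $[1,\alpha-1]$, and its minimum there is $f(\alpha-1)$.
\item Check that $f(\alpha-1)\ge0$. We have
\[
f(\alpha-1)=(\alpha-1)^2-m(\alpha-1)+p\alpha+p\ge0
\iff m\le\frac{(\alpha-1)^2+p\alpha+p}{\alpha-1}.
\]
Adding $p\alpha$ to both sides and simplifying, $p\alpha(\alpha-1)+p\alpha+p=p(\alpha^2+1)$. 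So the condition becomes
\[
n\le\frac{(\alpha^2+1)p+(\alpha-1)^2}{\alpha-1},
\]
which is precisely the assumed upper bound.
\end{enumerate}
Together, (a) and (b) give $f(k)\ge f(\alpha-1)\ge0$ for all $1\le k\le\alpha-1$. Corollary~\ref{cor:pquasi} then yields log-concavity.

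The only genuinely delicate point is step (a), the location of $\alpha-1$ relative to the vertex. The condition $f(\alpha-1)\ge0$ alone would also hold if $\alpha-1$ were beyond the larger root $r_2$. In that case $f$ could be negative at smaller $k$, for example at points of $(r_1,r_2)$ lying inside $[1,\alpha-1]$. The hypothesis $p\ge\alpha(\alpha-1)/(\alpha+1)$ is exactly what rules this out, via $\sqrt{p(\alpha+1)}\ge\alpha-1$. Everything else is routine algebra.
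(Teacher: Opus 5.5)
Your proof is correct and is essentially the paper's argument. Both reduce to Corollary~\ref{cor:pquasi} and split on the sign of the discriminant. In the second case, your two conditions, that the vertex $m/2$ lies to the right of $\alpha-1$ and that $f(\alpha-1)\ge0$, are exactly the paper's requirement $r-2\alpha+2\ge0$ together with its squared inequality (2), which expands to $f(\alpha-1)\ge0$. Your monotonicity phrasing only removes the paper's unnecessary appeal to the integrality of $n$ (since $r>2\sqrt{c}\ge2\alpha-2$ already suffices) and its use of $n\ge(p+1)\alpha$ in the first case.
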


\begin{proof}
By Corollary~\ref{cor:pquasi}, it is enough to guarantee
\[
        f(k)=k^2-(n-p\alpha)k+p\alpha+p\ge0
        \qquad(1\le k\le\alpha-1).
\]
Write $r=n-p\alpha$ and $c=p\alpha+p$.  The discriminant is
\[
        \Delta=r^2-4c.
\]
If $\Delta\le0$, then $f(k)\ge0$ for all real $k$.  This is equivalent to
\[
        n\le p\alpha+2\sqrt{p\alpha+p}.
\]
Because $p$-quasi-regularizability already implies $n\ge(p+1)\alpha$, the first interval is relevant exactly when
\[
        (p+1)\alpha\le p\alpha+2\sqrt{p\alpha+p},
\]
which is equivalent to $\alpha^2/(4(\alpha+1))\le p$.

Assume now that $\Delta>0$.  Let
\[
        \rho_1=\frac{r-\sqrt\Delta}{2},
        \qquad
        \rho_2=\frac{r+\sqrt\Delta}{2}
\]
be the two real roots.  Since $f$ opens upward, $f(k)\ge0$ for every integer $1\le k\le\alpha-1$ whenever $\alpha-1\le\rho_1$.  This condition is equivalent to
\[
        \sqrt\Delta\le r-2\alpha+2,                         \tag{1}
\]
where the right-hand side must be nonnegative.  Squaring (1) gives
\[
        r^2-4(p\alpha+p)\le (r-2\alpha+2)^2,
\]
or, after simplification,
\[
        n\le \frac{(\alpha^2+1)p+(\alpha-1)^2}{\alpha-1}.     \tag{2}
\]
Conversely, if $r-2\alpha+2\ge0$, then (2) implies (1), and hence $f(k)\ge0$ on the whole interval $1\le k\le\alpha-1$.

It remains only to check that the displayed lower bound in the statement forces the needed nonnegativity of $r-2\alpha+2$ in the integral range.  The hypothesis
\[
        \frac{\alpha(\alpha-1)}{\alpha+1}\le p
\]
implies
\[
        2\sqrt{p\alpha+p}\ge 2\alpha-2.
\]
Thus, since $n$ is an integer, the strict inequality
\[
        p\alpha+2\sqrt{p\alpha+p}<n
\]
forces $n\ge p\alpha+2\alpha-1$, and therefore $r-2\alpha+2\ge1$.  This is precisely the second case.  The calculation is the same discriminant analysis used in \cite[Theorem~3.3]{HoangLevitMandrescuPham}, but stated here independently of the special threshold theorem for $p\ne2$.
\end{proof}

\begin{theorem}[Unimodality criterion]\label{thm:unimodal}
Let $G$ be connected, $G\in\W_p$, $G$ $\lambda$-quasi-regularizable, with $n=n(G)$ and $\alpha=\alpha(G)$.  Define
\[
        L=\left\lfloor \frac{p\alpha-1}{p+1}\right\rfloor,
        \qquad
        R=\left\lceil \frac{n-1}{\lambda+2}\right\rceil.
\]
Then
\[
        s_0\le s_1\le\cdots\le s_{L+1},
        \qquad
        s_R\ge s_{R+1}\ge\cdots\ge s_\alpha.
\]
In particular, if $R\le L+1$, then $I(G;x)$ is unimodal.
\end{theorem}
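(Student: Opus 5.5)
Both halves come from the two inequalities of Lemma~\ref{lem:coef}: part (ii) gives the increasing initial segment, and part (i) gives the decreasing tail. In each case we only have to turn a ratio bound into a comparison $s_{k+1}\ge s_k$ or $s_{k+1}\le s_k$ over the stated range of $k$.

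For the increasing part, the case $k=0$ is handled separately: $s_1=n\ge 1=s_0$. For $1\le k\le\alpha-1$, Lemma~\ref{lem:coef}(ii) gives
\[
(k+1)s_{k+1}\ge p(\alpha-k)s_k,
\]
so $s_{k+1}\ge s_k$ as soon as $p(\alpha-k)\ge k+1$, that is, $k\le (p\alpha-1)/(p+1)$. Since $k$ is an integer, this holds exactly for $k\le L$, which yields $s_1\le s_2\le\cdots\le s_{L+1}$. We must also make sure the indices stay in range. We have $L+1\le\alpha$ because $(p\alpha-1)/(p+1)<\alpha$. If $L<1$, the chain is just $s_0\le s_1$ (or trivial), which holds. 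If $L$ is negative (for example $\alpha=0$), the statement is vacuous, and we treat that degenerate case separately.

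For the decreasing part, let $0\le k\le\alpha-1$. Lemma~\ref{lem:coef}(i) gives
\[
(k+1)s_{k+1}\le \bigl(n-(\lambda+1)k\bigr)s_k,
\]
so $s_{k+1}\le s_k$ whenever $n-(\lambda+1)k\le k+1$, that is, $k\ge (n-1)/(\lambda+2)$, i.e.\ $k\ge R$. Applying this for $k=R,\dots,\alpha-1$ gives $s_R\ge s_{R+1}\ge\cdots\ge s_\alpha$. If $R\ge\alpha$, the chain has at most one term and there is nothing to prove. We must also check that no sign issue arises when $n-(\lambda+1)k$ is negative. This cannot happen: $s_{k+1}>0$ for $k+1\le\alpha$, so Lemma~\ref{lem:coef}(i) itself forces $n-(\lambda+1)k>0$, and the argument only uses the inequality as stated in any case.

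Finally, suppose $R\le L+1$. Take $t=L+1$ if $R\le L+1\le\alpha$; since the decreasing chain starts at $R\le t$, it in particular holds from $t$ onward. Together with the increasing chain up to $t$, this is unimodality with mode $t$. If instead $L+1>\alpha$, which we showed cannot happen, the whole sequence would be nondecreasing and unimodality would be immediate. The only mildly delicate points in the whole argument are the boundary indices, namely $k=0$ in the increasing chain, where Lemma~\ref{lem:coef}(ii) is not available, and confirming that $L+1\le\alpha$; the rest is routine manipulation of floors and ceilings.
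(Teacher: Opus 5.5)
Your proof is correct and follows the paper's argument: Lemma~\ref{lem:coef}(ii) gives $s_{k+1}\ge s_k$ for integers $k\le (p\alpha-1)/(p+1)$, Lemma~\ref{lem:coef}(i) gives $s_{k+1}\le s_k$ for $k\ge (n-1)/(\lambda+2)$, and the two chains overlap when $R\le L+1$. Your extra checks fill in boundary details the paper leaves implicit: the step $s_0\le s_1$ via $s_1=n\ge1$ (Lemma~\ref{lem:coef}(ii) is only stated for $k\ge1$), and the bound $L+1\le\alpha$.
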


\begin{proof}
Lemma~\ref{lem:coef}(ii) gives
\[
        \frac{s_{k+1}}{s_k}\ge \frac{p(\alpha-k)}{k+1}.
\]
The right-hand side is at least $1$ exactly when $k\le(p\alpha-1)/(p+1)$; this proves the increasing chain.  Lemma~\ref{lem:coef}(i) gives
\[
        \frac{s_{k+1}}{s_k}\le \frac{n-(\lambda+1)k}{k+1}.
\]
The right-hand side is at most $1$ exactly when $k\ge(n-1)/(\lambda+2)$; this proves the decreasing chain.  If the two chains overlap, the sequence has a peak interval and is unimodal.  Figure~\ref{fig:coeff-bounds} shows the mechanism.
\end{proof}

\begin{figure}[t]
\centering
\begin{tikzpicture}[x=0.78cm,y=0.7cm,font=\small]
  \draw[-{Latex[length=2mm]},thick] (0,0)--(12,0) node[right] {$k$};
  \foreach \x/\lab in {1/$0$,4/$L+1$,7/$R$,11/$\alpha$}{\draw (\x,0.08)--(\x,-0.08) node[below] {\lab};}
  \draw[blue!70!black,ultra thick,-{Latex[length=2mm]}] (1,0.65)--(4.9,0.65);
  \node[blue!70!black,above] at (2.95,0.65) {$s_0\le\cdots\le s_{L+1}$};
  \draw[red!70!black,ultra thick,{Latex[length=2mm]}-] (6.1,0.65)--(11,0.65);
  \node[red!70!black,above] at (8.55,0.65) {$s_R\ge\cdots\ge s_\alpha$};
  \draw[decorate,decoration={brace,amplitude=5pt,mirror}] (4,-0.75)--(7,-0.75) node[midway,below=5pt,align=center] {central gap};
  \node[align=center] at (6,1.75) {If the gap is empty, the coefficient sequence is unimodal.};
\end{tikzpicture}
\caption{The two-sided coefficient inequalities force an initial increasing segment and a final decreasing segment.  The overlap condition $R\le L+1$ yields unimodality.}
\label{fig:coeff-bounds}
\end{figure}

\section{A local expansion theorem for connected \texorpdfstring{$\W_2$}{W2} graphs}\label{sec:local}

This section proves the result that resolves Conjecture 4.2.  The proof is organized as a minimal-counterexample argument.  The forbidden normal form is drawn in Figure~\ref{fig:normal-form}.

\begin{figure}[t]
\centering
\begin{tikzpicture}[font=\small, every node/.style={align=center}]
  \tikzset{v/.style={circle,draw,minimum size=6.5mm,inner sep=0pt},
           priv/.style={circle,draw,fill=blue!12,minimum size=6.5mm,inner sep=0pt},
           shared/.style={circle,draw,fill=orange!18,minimum size=7mm,inner sep=0pt}}
  \node[v] (a1) at (0,0) {$x_1$};
  \node[v] (a2) at (1.5,0) {$x_2$};
  \node[v] (a3) at (3,0) {$x_3$};
  \node[draw=none] (adots) at (4.2,0) {$\cdots$};
  \node[v] (aa) at (5.3,0) {$x_a$};

  \node[priv] (m1) at (0,-1.55) {$m_1$};
  \node[priv] (m2) at (1.5,-1.55) {$m_2$};
  \node[priv] (m3) at (3,-1.55) {$m_3$};
  \node[draw=none] at (4.2,-1.55) {$\cdots$};
  \node[priv] (ma) at (5.3,-1.55) {$m_a$};

  \node[shared] (u1) at (1.0,-3.05) {$u$};
  \node[shared] (u2) at (3.8,-3.05) {$w$};
  \node[draw=none] at (5.2,-3.05) {$\cdots$};

  \node[ellipse,draw,minimum width=3.0cm,minimum height=1.2cm,fill=green!10] (H) at (7.7,-2.05) {$H=G_A$};

  \foreach \i in {1,2,3}{\draw[thick] (a\i)--(m\i);}
  \draw[thick] (aa)--(ma);
  \draw[thick,orange!70!black] (u1)--(a1);
  \draw[thick,orange!70!black] (u1)--(a2);
  \draw[thick,orange!70!black] (u1)--(a3);
  \draw[thick,orange!70!black] (u1)--(aa);
  \draw[thick,orange!70!black] (u2)--(a2);
  \draw[thick,orange!70!black] (u2)--(a3);
  \draw[thick,dashed] (u1)--(H);

  \begin{scope}[on background layer]
    \node[draw=blue!60!black,rounded corners,fit=(m1)(m2)(m3)(ma),inner sep=4pt,label=left:$M$] {};
    \node[draw=orange!70!black,rounded corners,fit=(u1)(u2),inner sep=8pt,label=left:$U$] {};
    \node[draw=black!55,rounded corners,fit=(a1)(a2)(a3)(aa),inner sep=5pt,label=left:$A$] {};
  \end{scope}
  \node[draw=none,font=\scriptsize] at (7.8,0.45) {$|D|=2a-1$};
\end{tikzpicture}
\caption{The normal form forced by a smallest hypothetical violation of $|N(A)|\ge2|A|$.  Private neighbors $m_i$ cannot see the localization $H=G_A$; any vertex of $U$ that sees $H$ is forced to be universal on $A$.}
\label{fig:normal-form}
\end{figure}

\begin{theorem}[Local neighborhood expansion]\label{thm:local-expansion}
Let $G$ be a connected graph in $\W_2$.  If $A\in\Ind(G)$ is not maximum, then
\[
        |N_G(A)|\ge 2|A|.
\]
\end{theorem}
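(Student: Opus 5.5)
We argue by minimal counterexample, following the outline given in the introduction. Suppose the theorem fails, and choose a non-maximum independent set $A$ with $|N_G(A)|<2|A|$ and $a=|A|$ as small as possible. The case $a=1$ is handled by Lemma~\ref{lem:w2facts}(v). If $\alpha(G)=1$, then $A=\emptyset$ and there is nothing to prove. Otherwise $\delta(G)\ge2$, so $|N(v)|\ge2$ for every vertex $v$. Hence $a\ge2$.

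By minimality, every proper subset $B\subsetneq A$ satisfies $|N(B)|\ge 2|B|$; note that $B$ is non-maximum because $A$ is. Removing one vertex $x$ from $A$ gives
\[
|N(A)|\ge |N(A-x)|\ge 2a-2 .
\]
Together with the assumption this forces $|N(A)|=2a-1$. Let $D=N(A)$.

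\textbf{Step 1 (private neighbours).} For each $x\in A$, the vertices of $D$ adjacent only to $x$ are counted in $N(A)$ but not in $N(A-x)$. Since $2a-1-(2a-2)\le1$, each $x$ has at most one private neighbour. The next goal is to show it has exactly one. I would use the $\W_2$ avoidance property, Lemma~\ref{lem:w2facts}(iv), on $A-x$ together with a suitable vertex, which yields a maximum independent set $S\supseteq A-x$ avoiding that vertex. If $x$ had no private neighbour, then $N(A-x)=D$ and $|D|=2a-1$. I would then compare $x$ with $A-x$ inside the localization $G_{A-x}$: by Lemma~\ref{lem:w2facts}(ii) this graph lies in $\W_2$, and $x$ is isolated in it. That contradicts Lemma~\ref{lem:w2facts}(i). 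So every $x\in A$ has a unique private neighbour $m_x$. The remaining set $U=D\setminus\{m_x: x\in A\}$ has size $a-1$, and each vertex of $U$ sees at least two vertices of $A$.

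\textbf{Step 2 (private neighbours do not see $H$).} Let $H=G_A$. Since $A$ is not maximum, $H\in\W_2$ and $\alpha(H)=\alpha(G)-a\ge1$. Suppose some $m_x$ has a neighbour $h\in H$. Apply $\W_2$ to the disjoint independent sets $A$ and $\{m_x\}$ (or to $A-x$ and $\{x\}$) to obtain disjoint maximum sets. Then I would exhibit $(A-x)\cup\{m_x\}$ as a non-maximum set with too few neighbours, or with fewer than $a$ elements and a deficit, contradicting minimality. The aim is to show that any vertex of $H$ adjacent to $m_x$ must, via the extension, produce a maximum independent set containing $A-x$ and missing both $x$ and $m_x$. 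Such a set could be enlarged by $x$ or $m_x$, which is a contradiction.

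\textbf{Step 3 (vertices of $U$ seeing $H$ are universal on $A$).} Take $u\in U$ adjacent to $H$. I would argue in the same way, with a subset $B\subseteq A$ of vertices not adjacent to $u$. Consider $B\cup\{u\}$, or the localization at $B$. In $G_B$, the set $A\setminus B$ together with $u$ gives a smaller deficient configuration unless $B=\emptyset$.

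\textbf{Step 4 (final contradiction).} This is the main obstacle. Connectivity of $G$, together with Step 2 and the case $H\neq\emptyset$, forces some $u\in U$ adjacent both to $H$ and to every vertex of $A$. Then I would localize a second time, at a suitable maximum independent set $T$ of $H$, or at a vertex $h\in N(u)\cap H$. In the resulting graph I would construct an independent set containing $u$ whose every maximum extension must also contain some $m_x$. This would contradict the avoidance property Lemma~\ref{lem:w2facts}(iv).

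The case $H=\emptyset$ cannot occur, because $\alpha(H)\ge1$. The delicate part is making the counting of Steps 2 and 3 rigorous. I would first test the argument against $C_5$, where $A$ is maximum, to see exactly where non-maximality enters.
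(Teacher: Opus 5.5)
\textbf{There is a genuine gap: Steps~3 and~4 contain no actual argument, and the counting idea that closes the proof is missing.}

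Your Step~1 matches the paper's, with one ordering slip. The bounds $|N(A)|\ge 2a-2$ and $|N(A)|<2a$ do not by themselves force $|N(A)|=2a-1$; that follows only after the isolated-vertex argument in $G_{A-\{x\}}$ shows every $x$ has a private neighbor.

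Step~2 can be closed as your last sentence suggests, but you must put the neighbor into the set. If $h\in V(H)$ is adjacent to $m_x$, apply Lemma~\ref{lem:w2facts}(iv) to $(A-\{x\})\cup\{h\}$ and $x$. The resulting maximum set misses $m_x$ and all other neighbors of $x$, so $x$ can be added. The paper says the same thing via the leaf $x$ in $G_{A-\{x\}}$.

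In Step~3 nothing shows that $R=A-B$ is deficient in the component $C$ of $G_B$ containing $u$. You need two facts:
\begin{itemize}
\item the exact count $|N_C(R)|=|R|+t(R)$, where $t(R)$ is the number of vertices of $U$ whose neighbors in $A$ all lie in $R$;
\item the bound $t(R)\le|R|-1$, which comes from applying minimality to $B$ in $G$: at least $|B|$ of the $a-1$ vertices of $U$ see $B$.
\end{itemize}
Since this invokes minimality in the different graph $C$, you must also state explicitly that $a$ is minimized over all pairs $(G,A)$ with $G$ connected in $\W_2$, not over sets in a fixed $G$.

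Step~4, which you flag as the main obstacle, has no argument, and the mechanism you propose gives no contradiction as stated. Once $u$ is universal on $A$, every maximum independent set through $u$ has at least $a-1$ vertices in $D-\{u\}$. Only $a-2$ of those can lie in $U-\{u\}$, so it already contains ``some $m_x$'' by pigeonhole. Lemma~\ref{lem:w2facts}(iv) is violated only if one prescribed vertex is forced, and you give no way to achieve that.

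The missing idea is the following count:
\begin{itemize}
\item Since $|D|=2a-1$ and an independent set meets $H$ in at most $b$ vertices, every maximum independent set of $G$ disjoint from $A$ has at least $a$ vertices in $D$.
\item Hence $G$ has no two disjoint maximum independent sets that both avoid $A$.
\item So it suffices to apply the $\W_2$ property to $\{u\}$ and some independent $S\subseteq D-\{u\}$ that dominates $A$.
\end{itemize}

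Your suggested localization at a maximum independent set of $H$ produces such an $S$. Take $q\in N_H(u)$ and $T\in\Omega(H)$ containing $q$. Then $V(G_T)=A\sqcup(D-N_G(T))$, $u\notin V(G_T)$, $G_T\in\W_2$ and $\alpha(G_T)=a$, so $A\in\Omega(G_T)$. The $\W_2$ property applied to $A$ and $\varnothing$ gives $S\in\Omega(G_T)$ disjoint from $A$, and by maximality $S$ dominates $A$.

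This would be a shorter ending than the paper's Steps~4--5, which prove uniqueness of $u$, localize at a single $x\in A$, and apply avoidance to $C\cup\{q\}$. But it rests entirely on the counting argument your proposal lacks.
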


\begin{proof}
Assume the theorem is false.  Choose a counterexample $(G,A)$ with $G$ connected, $G\in\W_2$, $A\in\Ind(G)$ non-maximum, and
\[
        |N_G(A)|<2|A|,
\]
so that $a=|A|$ is minimum among all such counterexamples.  Put
\[
        D=N_G(A),
        \qquad
        H=G_A,
        \qquad
        b=\alpha(H)=\alpha(G)-a.
\]
Since $A$ is not maximum, $b\ge1$ and $H$ is nonempty.

\smallskip
\noindent\textbf{Step 1: the deficiency is exactly one.}
We first show that
\[
        |D|=2a-1.
\]
The case $a=1$ is impossible: if $A=\{x\}$, then $\deg_G(x)<2$, while $x$ is not a maximum independent set.  Lemma~\ref{lem:w2facts}(i) rules out degree $0$, and since $G\ne K_2$, the leafless property in Lemma~\ref{lem:w2facts}(v) rules out degree $1$.  Hence $a\ge2$.

Fix $x\in A$ and define the private neighborhood of $x$ with respect to $A$ by
\[
        X_x=N_G(x)-N_G(A-\{x\}).
\]
The set $A-\{x\}$ is still non-maximum, because any vertex that extends $A$ also extends $A-\{x\}$.  By the minimality of $a$,
\[
        |N_G(A-\{x\})|\ge2(a-1).
\]
In the localization $G_{A-\{x\}}$, the vertex $x$ is present.  Since $G_{A-\{x\}}\in\W_2$, it has no isolated vertices; therefore $X_x\ne\varnothing$.  The disjoint union
\[
        N_G(A)=N_G(A-\{x\})\sqcup X_x
\]
then gives
\[
        |D|\ge2(a-1)+1=2a-1.
\]
Since $|D|<2a$, we have $|D|=2a-1$, and moreover
\[
        |X_x|=1\qquad\text{for every }x\in A.
\]
Let $m_x$ be the unique vertex of $X_x$.  Put
\[
        M=\{m_x:x\in A\},
        \qquad
        U=D-M.
\]
Then $|M|=a$, $|U|=a-1$, and every vertex of $U$ has at least two neighbors in $A$.

\smallskip
\noindent\textbf{Step 2: private neighbors do not see the localization.}
For each $x\in A$, the vertices remaining in $G_{A-\{x\}}$ include $x$, its unique neighbor $m_x$ in that localization, and $H$.  Thus $x$ is a leaf of the component of $G_{A-\{x\}}$ containing it.  By Lemma~\ref{lem:w2facts}(iii) the component is in $\W_2$, and by Lemma~\ref{lem:w2facts}(v) it must be $K_2$.  Hence
\[
        N_G(m_x)\cap V(H)=\varnothing
        \qquad(x\in A).
\]
Figure~\ref{fig:k2-localization} illustrates this forced $K_2$ component.

\begin{figure}[t]
\centering
\begin{tikzpicture}[font=\small, every node/.style={align=center}]
  \tikzset{v/.style={circle,draw,minimum size=7mm,inner sep=0pt},
           gone/.style={circle,draw,dashed,minimum size=7mm,inner sep=0pt,text=gray}}
  \node[v,fill=white] (x) at (0,0) {$x$};
  \node[v,fill=blue!12] (m) at (1.4,0) {$m_x$};
  \draw[thick] (x)--(m);
  \node[ellipse,draw,fill=green!10,minimum width=2.5cm,minimum height=1.0cm] (H) at (4.2,0) {$H=G_A$};
  \draw[dashed,red!70!black,thick] (m)--(H) node[midway,above] {forbidden};
  \node[draw=none,align=center] at (2.1,-1.15) {in $G_{A-\{x\}}$, $x$ is a leaf};
  \node[draw=none,align=center] at (2.1,-1.65) {a connected $\W_2$ component with a leaf is $K_2$};
\end{tikzpicture}
\caption{Localizing at $A-\{x\}$ leaves $x$ with the unique neighbor $m_x$.  The leafless property of nontrivial connected $\W_2$ graphs forces the whole component to be $K_2$, so $m_x$ has no neighbor in $H$.}
\label{fig:k2-localization}
\end{figure}

\smallskip
\noindent\textbf{Step 3: every non-private neighbor that touches $H$ is universal on $A$.}
Let $u\in U$ and assume $u$ has a neighbor in $H$.  Set
\[
        R=N_G(u)\cap A,
        \qquad
        T=A-R.
\]
Since $u\in U$, we have $|R|\ge2$.  We prove that $R=A$.

Suppose, to the contrary, that $T\ne\varnothing$.  Consider the localization $G_T$; see Figure~\ref{fig:step3-localization}.  The set $R$, the vertex $u$, and the part of $H$ touched by $u$ lie in one connected component $C$ of $G_T$.  This component belongs to $\W_2$ by Lemma~\ref{lem:w2facts}(ii),(iii).  The set $R$ is not maximum in $C$, because a neighbor of $u$ lying in $H$ can be added to $R$.

Let
\[
        t(R)=\bigl|\{w\in U: N_G(w)\cap A\subseteq R\}\bigr|.
\]
Inside the component $C$, the neighbors of $R$ are precisely the private vertices $m_x$ with $x\in R$, together with the $t(R)$ vertices of $U$ not deleted by the localization at $T$.  Therefore
\[
        |N_C(R)|=|R|+t(R).
\]
If $t(R)<|R|$, then $C$ would be a connected $\W_2$ graph with a non-maximum independent set $R$ satisfying $|N_C(R)|<2|R|$, contradicting the minimality of $a$ because $|R|<a$.  Hence
\[
        t(R)\ge |R|.        \tag{1}
\]

On the other hand, the set $T$ is non-maximum in $G$, and $|T|<a$.  Minimality gives
\[
        |N_G(T)|\ge2|T|.
\]
The neighborhood of $T$ consists of the $|T|$ private vertices $m_x$ with $x\in T$, together with the vertices of $U$ that see $T$.  Thus at least $|T|$ vertices of $U$ see $T$.  These are exactly the vertices of $U$ not counted by $t(R)$.  Since $|U|=a-1$, we get
\[
        t(R)\le(a-1)-|T|=|R|-1,
\]
contradicting (1).  Therefore $R=A$, and $u$ is universal on $A$.

\begin{figure}[t]
\centering
\begin{tikzpicture}[font=\small]
  \tikzset{v/.style={circle,draw,minimum size=7mm,inner sep=0pt},
           del/.style={circle,draw,dashed,minimum size=7mm,inner sep=0pt,text=gray},
           keep/.style={circle,draw,fill=green!10,minimum size=7mm,inner sep=0pt}}
  \node[v] (r1) at (0,1.15) {$R$};
  \node[del] (t1) at (-0.15,-1.55) {$T$};
  \node[v,fill=orange!18] (u) at (2.0,0.15) {$u$};
  \node[v,fill=blue!12] (mr) at (2.0,1.45) {$M_R$};
  \node[del] (ut) at (2.1,-1.8) {$U_T$};
  \node[keep] (ur) at (3.95,1.15) {$U_R$};
  \node[ellipse,draw,fill=green!8,minimum width=2.5cm,minimum height=1.2cm] (H) at (5.75,0.15) {$H$};
  \draw[thick] (r1)--(u);
  \draw[thick] (r1)--(mr);
  \draw[thick] (u)--(ur);
  \draw[thick] (u)--(H);
  \draw[dashed] (t1)--(ut);
  \draw[dashed] (t1)--(u);
  \node[draw=none,rectangle,align=center,font=\scriptsize] at (2.95,-2.7)
      {In $G_T$, the vertices of $T$ and all their neighbors are deleted.\\Only $R$, its private neighbors, the vertices of $U$ seeing only $R$, and $H$ remain.};
  \begin{scope}[on background layer]
    \node[draw=green!50!black,rounded corners,fill=green!4,fit=(r1)(u)(mr)(ur)(H),inner sep=5pt] {};
  \end{scope}
  \node[draw=none,rectangle,font=\small] at (3.0,2.4) {component $C$ of $G_T$};
\end{tikzpicture}
\caption{The localization $G_T$ used in Step~3.  If $u\in U$ sees $H$ but is not universal on $A$, then $A=R\sqcup T$ with $T\neq\varnothing$.  In $G_T$, the component $C$ contains $R$, the vertex $u$, the private neighbors of vertices in $R$, the vertices of $U$ whose neighbors in $A$ are contained in $R$, and the part of $H$ touched by $u$.}
\label{fig:step3-localization}
\end{figure}

Because $G$ is connected, $H$ is nonempty, and no private vertex $m_x$ sees $H$, at least one vertex of $U$ must see $H$.  Hence there exists a vertex $u\in U$ universal on $A$ and adjacent to $H$.

\smallskip
\noindent\textbf{Step 4: the universal vertex is unique.}
Suppose that two distinct vertices $u,v\in U$ are universal on $A$.  Apply the $\W_2$ property to the disjoint independent sets $\{u\}$ and $\{v\}$.  We obtain two disjoint maximum independent sets $S_u,S_v$ with $u\in S_u$ and $v\in S_v$.  Since $u$ and $v$ are adjacent to every vertex of $A$, neither maximum independent set can use any vertex of $A$.  Each of them contains at most $b$ vertices in $H$, so each must contain at least $a$ vertices from $D=N_G(A)$.  The two maximum independent sets are disjoint, so this would require at least $2a$ distinct vertices in $D$, contradicting $|D|=2a-1$.  Thus the universal vertex in $U$ is unique.  Fix it and call it $u$, and choose
\[
        q\in N_H(u).
\]

\smallskip
\noindent\textbf{Step 5: the final localization.}
Fix an arbitrary vertex $x\in A$ and set $R=A-\{x\}$.  In the localization $G_x$, the universal vertex $u$ is deleted.  By Step 3, every vertex of $U-\{u\}$ has no neighbor in $H$; by Step 2, the same is true of every private vertex.  Hence $G_x$ is the disjoint union of $H$ and a graph $L_R$ on the remaining vertices outside $H$:
\[
        V(L_R)=R\cup\{m_y:y\in R\}
        \cup\{w\in U-\{u\}:N_G(w)\cap A\subseteq R\}.
\]
Since $G_x\in\W_2$, also $L_R\in\W_2$.  Moreover,
\[
        \alpha(L_R)=\alpha(G_x)-\alpha(H)=(\alpha(G)-1)-b=a-1.
\]
Thus $R$ is a maximum independent set of $L_R$.

Apply the definition of $\W_2$ in $L_R$ to the two disjoint independent sets $R$ and $\varnothing$.  Since $R$ is already maximum, there exists a maximum independent set $C$ of $L_R$ disjoint from $R$.  Therefore
\[
        |C|=a-1,
        \qquad
        C\subseteq D-\{u\}.
\]
As $C$ is maximal in $L_R$, it dominates $R$.

We claim that $C$ also dominates every vertex of $(D-\{u\})-C$.  It already dominates all vertices of $L_R-C$ by maximality in $L_R$.  Suppose that some vertex
\[
        d\in (D-\{u\})-V(L_R)
\]
has no neighbor in $C$.  Then $B=C\cup\{d\}$ is independent.  The vertices outside $L_R$ in $D-\{u\}$ are exactly the non-universal neighbors deleted when localizing at $x$, together with $m_x$; in particular, $d$ is adjacent to $x$.  Since $C$ dominates $R=A-\{x\}$, the independent set $B$ sees every vertex of $A$.

Apply $\W_2$ in $G$ to the disjoint independent sets $\{u\}$ and $B$.  A maximum independent set containing $u$ cannot contain vertices of $A$; a maximum independent set containing $B$ also cannot contain vertices of $A$, because $B$ sees all of $A$.  Each of the two disjoint maximum independent sets must therefore use at least $a$ vertices of $D$, again contradicting $|D|=2a-1$.  This proves that $C$ dominates all of $(D-\{u\})-C$, as claimed.

Now $C\cup\{q\}$ is independent: vertices of $C$ are either private vertices or non-universal vertices of $U$, and none of these has a neighbor in $H$.  Also $|C\cup\{q\}|=a<\alpha(G)=a+b$, so it is not maximum.  By the avoidance property in Lemma~\ref{lem:w2facts}(iv), there is a maximum independent set $S$ of $G$ such that
\[
        C\cup\{q\}\subseteq S,
        \qquad
        x\notin S.
\]
This is impossible; see Figure~\ref{fig:final-obstruction}.  The vertex $q$ forbids $u$, the set $C$ dominates $R$ and all of $(D-\{u\})-C$, and $x$ is explicitly forbidden.  Hence outside $H$ the set $S$ can contain no vertex except the vertices of $C$.  Inside $H$ it contains at most $b$ vertices.  Consequently
\[
        |S|\le |C|+b=(a-1)+b=\alpha(G)-1,
\]
contradicting $S\in\Omega(G)$.  This contradiction completes the proof.
\end{proof}

\begin{figure}[t]
\centering
\begin{tikzpicture}[font=\small]
  \tikzset{v/.style={circle,draw,minimum size=7mm,inner sep=0pt},
           cnode/.style={circle,draw,fill=purple!15,minimum size=7mm,inner sep=0pt},
           blocked/.style={circle,draw,fill=gray!15,minimum size=7mm,inner sep=0pt}}
  \node[v] (x) at (0,0) {$x$};
  \node[v] (r1) at (1.4,0.8) {$R$};
  \node[cnode] (c1) at (2.8,0.8) {$C$};
  \node[blocked] (d) at (4.2,0.8) {$D'\!-C$};
  \node[v,fill=orange!18] (u) at (4.2,-0.75) {$u$};
  \node[v,fill=green!12] (q) at (6.0,-0.75) {$q$};
  \node[ellipse,draw,fill=green!8,minimum width=2.2cm,minimum height=1.0cm] (H) at (6.2,0.8) {$H$};
  \draw[thick,purple!70!black,->] (c1)--(r1);
  \draw[thick,purple!70!black,->] (c1)--(d);
  \draw[thick,->] (q)--(u);
  \node[draw=none,align=center,font=\scriptsize] at (2.8,1.57) {$C$ dominates $R$ and $D'\!-C$};
  \node[draw=none,font=\scriptsize] at (5.1,-1.25) {$q$ forbids $u$};
  \draw[thick] (q)--(H);
  \draw[dashed,red!70!black] (x)--(0,-1.0) node[below] {$x\notin S$};
  \node[align=center] at (3.2,-2.0) {A maximum set containing $C\cup\{q\}$ and avoiding $x$\\has no available vertices outside $H$ except $C$};
\end{tikzpicture}
\caption{The final contradiction.  After localizing at $x$, a maximum set $C$ disjoint from $R=A-\{x\}$ dominates all remaining vertices outside $H$ except the universal vertex $u$, while $q\in H$ forbids $u$.}
\label{fig:final-obstruction}
\end{figure}

\section{Solution of Conjecture 4.2}\label{sec:conjecture}

\begin{theorem}[Conjecture 4.2]\label{thm:conj42}
Let $G$ be a connected graph in $\W_2$.  Then $G$ is $2$-quasi-regularizable if and only if
\[
        n(G)\ge3\alpha(G).
\]
\end{theorem}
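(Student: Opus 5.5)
The plan is to derive both directions directly from Theorem~\ref{thm:local-expansion}, splitting the independent sets of $G$ into maximum and non-maximum ones.

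For necessity, suppose $G$ is $2$-quasi-regularizable and pick any $S\in\Omega(G)$. Then $S\cap N_G(S)=\varnothing$, so $|S|+|N_G(S)|\le n(G)$. Combined with the hypothesis $|N_G(S)|\ge2|S|=2\alpha(G)$, this gives $n(G)\ge3\alpha(G)$. Connectivity and the $\W_2$ property are not used in this direction.

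For sufficiency, assume $n(G)\ge3\alpha(G)$ and let $A\in\Ind(G)$. There are two cases.
\begin{itemize}[leftmargin=2em]
\item If $A$ is not maximum, Theorem~\ref{thm:local-expansion} gives $|N_G(A)|\ge2|A|$ immediately. This includes $A=\varnothing$.
\item If $A\in\Omega(G)$, then $A$ is a maximal independent set, so every vertex outside $A$ has a neighbor in $A$. Hence $N_G(A)=V(G)-A$ and $|N_G(A)|=n(G)-\alpha(G)\ge2\alpha(G)=2|A|$.
\end{itemize}
The two cases together show that $G$ is $2$-quasi-regularizable.

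There is no real obstacle here, since all the difficulty sits in Theorem~\ref{thm:local-expansion}. The one point to check is the identity $N_G(S)=V(G)-S$ for maximum independent sets, which follows from maximality. This identity is what makes the condition $n(G)\ge3\alpha(G)$ exactly equivalent to the inequality on the maximum sets.
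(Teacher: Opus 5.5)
Your proposal is correct and matches the paper's proof: you get necessity by applying $2$-quasi-regularizability to a maximum independent set, and sufficiency by splitting into non-maximum sets, handled by Theorem~\ref{thm:local-expansion}, and maximum sets, where $N_G(A)=V(G)-A$. The only cosmetic difference is that in the necessity direction you use the inequality $|S|+|N_G(S)|\le n(G)$ where the paper uses the equality $|N_G(S)|=n(G)-\alpha(G)$, and both are valid.
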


\begin{proof}
If $G$ is $2$-quasi-regularizable and $S\in\Omega(G)$, then
\[
        n(G)-\alpha(G)=|N_G(S)|\ge2|S|=2\alpha(G),
\]
so $n(G)\ge3\alpha(G)$.

Conversely, assume $n(G)\ge3\alpha(G)$ and let $A\in\Ind(G)$.  If $A$ is not maximum, then Theorem~\ref{thm:local-expansion} gives
\[
        |N_G(A)|\ge2|A|.
\]
If $A$ is maximum, then
\[
        |N_G(A)|=n(G)-\alpha(G)\ge2\alpha(G)=2|A|.
\]
Thus $|N_G(A)|\ge2|A|$ for every independent set $A$, which is precisely $2$-quasi-regularizability.
\end{proof}

\begin{remark}
Theorem~\ref{thm:local-expansion} is stronger than Conjecture 4.2.  The global inequality $n(G)\ge3\alpha(G)$ is needed only for maximum independent sets.  Every non-maximum independent set in a connected $\W_2$ graph satisfies the $2$-quasi-regularizability inequality automatically.
\end{remark}

\section{Consequences for Problem 4.1}\label{sec:consequences}

We now insert the solved $p=2$ threshold into the coefficient criteria of Section~\ref{sec:coefficients}.

\begin{corollary}[Log-concavity for connected $\W_2$ graphs]\label{cor:p2LC}
Let $G$ be connected, $G\in\W_2$, with $n=n(G)$ and $\alpha=\alpha(G)$.  Suppose that $n\ge3\alpha$.  If
\[
        k^2-(n-2\alpha)k+2\alpha+2\ge0
        \qquad(1\le k\le\alpha-1),
\]
then $I(G;x)$ is log-concave.
\end{corollary}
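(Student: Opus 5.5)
This is a direct specialization, and I would prove it by chaining Theorem~\ref{thm:conj42} with Corollary~\ref{cor:pquasi} at $p=2$.

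\textbf{Step 1 (quasi-regularizability).} Since $G$ is connected, lies in $\W_2$, and satisfies $n\ge3\alpha$, Theorem~\ref{thm:conj42} shows that $G$ is $2$-quasi-regularizable. Behind this stands the local expansion Theorem~\ref{thm:local-expansion}: it covers all non-maximum independent sets, while maximum ones satisfy $|N_G(S)|=n-\alpha\ge2\alpha$ directly.

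\textbf{Step 2 (specialize the criterion).} Apply Corollary~\ref{cor:pquasi} with $p=2$. Its hypotheses are that $G$ is connected, lies in $\W_2$, and is $2$-quasi-regularizable, and all three are now in hand. The quadratic $f_{2,n,\alpha}(k)=k^2-(n-2\alpha)k+2\alpha+2$ is exactly the polynomial displayed in the statement. So the assumed inequality on $1\le k\le\alpha-1$ is precisely the hypothesis of that corollary, and log-concavity follows.

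\textbf{Cross-check.} As a sanity check, one can instead go through Theorem~\ref{thm:LC-general} with $\lambda=p=2$. This combines Lemma~\ref{lem:coef}(i) at index $k$ with Lemma~\ref{lem:coef}(ii) at index $k-1$, and then expands
\[
3k(\alpha-k+1)-k(n-3k)\cdot\tfrac{1}{1}
\]
after adjusting for the factor $p=2$. The expansion $2(k+1)(\alpha-k+1)-k(n-3k)=k^2-(n-2\alpha)k+2\alpha+2$ confirms the form of the quadratic.

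\textbf{Edge case.} When $\alpha\le1$, the range $1\le k\le\alpha-1$ is empty and log-concavity holds vacuously. The argument above covers this case as well.

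\textbf{Main obstacle.} There is no real obstacle here. All the substantive work is in Theorem~\ref{thm:local-expansion}, which is assumed. The only care needed is to check that the $p$ in Lemma~\ref{lem:coef}(ii) and the $\lambda$ in Lemma~\ref{lem:coef}(i) are both taken equal to $2$.
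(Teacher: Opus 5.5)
Your proposal is correct and matches the paper's proof: Theorem~\ref{thm:conj42} gives $2$-quasi-regularizability from $n\ge3\alpha$, and Corollary~\ref{cor:pquasi} with $p=2$ then yields log-concavity. The expression $3k(\alpha-k+1)-k(n-3k)$ in your cross-check is garbled and should be dropped, but the identity you verify next, $2(k+1)(\alpha-k+1)-k(n-3k)=k^2-(n-2\alpha)k+2\alpha+2$, is the right one.
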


\begin{proof}
By Theorem~\ref{thm:conj42}, the graph $G$ is $2$-quasi-regularizable.  Apply Corollary~\ref{cor:pquasi} with $p=2$.
\end{proof}

\begin{corollary}[Explicit $p=2$ interval form]\label{cor:p2interval}
Let $G$ be connected, $G\in\W_2$, with $n=n(G)$ and $\alpha=\alpha(G)\ge2$.  Suppose that $n\ge3\alpha$.  Then $I(G;x)$ is log-concave whenever either
\[
        3\alpha\le n\le 2\alpha+2\sqrt{2\alpha+2},
        \qquad
        \frac{\alpha^2}{4(\alpha+1)}\le2,
\]
or
\[
        2\alpha+2\sqrt{2\alpha+2}<n\le
        \frac{2(\alpha^2+1)+(\alpha-1)^2}{\alpha-1},
        \qquad
        \frac{\alpha(\alpha-1)}{\alpha+1}\le2.
\]
\end{corollary}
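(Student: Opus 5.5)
This is a direct specialization of Theorem~\ref{thm:interval} to $p=2$, once we know the $2$-quasi-regularizability hypothesis holds.

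\textbf{Step 1.} Use $n\ge3\alpha$ and Theorem~\ref{thm:conj42} to conclude that $G$ is $2$-quasi-regularizable. This is the only structural input, and it is exactly what was missing for $p=2$.

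\textbf{Step 2.} Invoke Theorem~\ref{thm:interval} with $p=2$ and $\alpha\ge2$. Its two hypotheses become the two displayed alternatives, with these substitutions:
\begin{itemize}[leftmargin=2em]
\item $(p+1)\alpha$ becomes $3\alpha$;
\item $p\alpha+2\sqrt{p\alpha+p}$ becomes $2\alpha+2\sqrt{2\alpha+2}$;
\item $\bigl((\alpha^2+1)p+(\alpha-1)^2\bigr)/(\alpha-1)$ becomes $\bigl(2(\alpha^2+1)+(\alpha-1)^2\bigr)/(\alpha-1)$;
\item the side conditions become $\alpha^2/(4(\alpha+1))\le2$ and $\alpha(\alpha-1)/(\alpha+1)\le2$.
\end{itemize}
After these substitutions the conclusion follows immediately.

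There is no real obstacle; the only care needed is to check that Theorem~\ref{thm:interval} itself does not assume $p\ne2$. Its proof rests only on Corollary~\ref{cor:pquasi} and a discriminant calculation valid for any $p$, so it applies verbatim.
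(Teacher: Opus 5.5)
Your proposal is correct and matches the paper's proof. Theorem~\ref{thm:conj42} turns $n\ge3\alpha$ into $2$-quasi-regularizability, and Theorem~\ref{thm:interval} is then specialized to $p=2$; you are right that it is stated and proved for arbitrary $p$.
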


\begin{proof}
Theorem~\ref{thm:conj42} gives $2$-quasi-regularizability from $n\ge3\alpha$.  The result is Theorem~\ref{thm:interval} with $p=2$.
\end{proof}

\begin{corollary}[Unimodality for connected $\W_2$ graphs]\label{cor:p2unimodal}
Let $G$ be connected, $G\in\W_2$, with $n=n(G)$ and $\alpha=\alpha(G)$.  Suppose that $n\ge3\alpha$.  If
\[
        \left\lceil \frac{n-1}{4}\right\rceil
        \le
        \left\lfloor \frac{2\alpha-1}{3}\right\rfloor+1,
\]
then $I(G;x)$ is unimodal.
\end{corollary}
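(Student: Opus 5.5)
The corollary follows by feeding the $2$-quasi-regularizability threshold into the unimodality criterion. The hypotheses are that $G$ is connected, $G\in\W_2$ and $n\ge3\alpha$, so Theorem~\ref{thm:conj42} shows that $G$ is $2$-quasi-regularizable. We therefore apply Theorem~\ref{thm:unimodal} with $p=2$ and $\lambda=2$.

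With these values the two indices in Theorem~\ref{thm:unimodal} become
\[
        L=\left\lfloor \frac{2\alpha-1}{3}\right\rfloor,
        \qquad
        R=\left\lceil \frac{n-1}{4}\right\rceil .
\]
Theorem~\ref{thm:unimodal} then gives the increasing chain $s_0\le\cdots\le s_{L+1}$ and the decreasing chain $s_R\ge\cdots\ge s_\alpha$. The hypothesis of the corollary is exactly $R\le L+1$, so the two chains overlap and the coefficient sequence is unimodal.

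The only points to check are edge cases in the index ranges. If $L+1>\alpha$ or $R<0$, the chains should be read as truncated to $0\le k\le\alpha$; the overlap argument still yields a peak index $t\in[R,L+1]\cap[0,\alpha]$. This intersection is nonempty because $R\ge0$, since $n\ge1$, and $L+1\ge0$. Lemma~\ref{lem:coef}(ii) holds only for $k\ge1$, so the step $s_0\le s_1$ in the increasing chain must be justified separately. It follows from $s_0=1\le n=s_1$. I expect no real obstacle, since this is a direct specialization and all the substance lies in Theorem~\ref{thm:conj42}.
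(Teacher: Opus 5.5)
Your proposal is correct and is exactly the paper's argument: Theorem~\ref{thm:conj42} turns $n\ge3\alpha$ into $2$-quasi-regularizability. Theorem~\ref{thm:unimodal} with $p=\lambda=2$ then gives $L=\lfloor(2\alpha-1)/3\rfloor$ and $R=\lceil(n-1)/4\rceil$, so the stated hypothesis is precisely $R\le L+1$.

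Your extra edge-case checks are harmless additions. The main one is using $s_0=1\le n=s_1$ to cover the $k=0$ step, which Lemma~\ref{lem:coef}(ii) does not state. Also, $L+1\le\alpha$ always holds, so no truncation is needed, and $[R,L+1]\subseteq[0,\alpha]$ is nonempty directly from $R\le L+1$.
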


\begin{proof}
By Theorem~\ref{thm:conj42}, the graph is $2$-quasi-regularizable.  Apply Theorem~\ref{thm:unimodal} with $p=\lambda=2$.
\end{proof}

\begin{corollary}[A unified set of sufficient conditions for Problem 4.1]\label{cor:problem41}
Let $G$ be a connected graph in $\W_p$, with $n=n(G)$ and $\alpha=\alpha(G)$.
\begin{enumerate}[label=\textup{(\alph*)}]
\item If $G$ is $\lambda$-quasi-regularizable and
\[
        (k+1)p(\alpha-k+1)-k\bigl(n-(\lambda+1)k\bigr)\ge0
\]
for every $1\le k\le\alpha-1$, then $I(G;x)$ is log-concave.
\item If $G$ is $\lambda$-quasi-regularizable and
\[
        \left\lceil \frac{n-1}{\lambda+2}\right\rceil
        \le
        \left\lfloor \frac{p\alpha-1}{p+1}\right\rfloor+1,
\]
then $I(G;x)$ is unimodal.
\item If $p\ne2$, then the theorem of Hoang--Levit--Mandrescu--Pham says that $n\ge(p+1)\alpha$ is equivalent to $p$-quasi-regularizability for connected $\W_p$ graphs.  Therefore parts \textup{(a)} and \textup{(b)} apply with $\lambda=p$ whenever $n\ge(p+1)\alpha$.
\item If $p=2$, then Theorem~\ref{thm:conj42} says that $n\ge3\alpha$ is equivalent to $2$-quasi-regularizability for connected $\W_2$ graphs.  Therefore parts \textup{(a)} and \textup{(b)} apply with $\lambda=2$ whenever $n\ge3\alpha$.
\end{enumerate}
\end{corollary}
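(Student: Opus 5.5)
This corollary is an assembly of results already established, so I would prove it by checking each part against the matching earlier statement. No new combinatorics is needed.

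For part (a), I would observe that the displayed inequality is exactly $\Phi_{p,\lambda,n,\alpha}(k)\ge0$ for $1\le k\le\alpha-1$. Theorem~\ref{thm:LC-general} then gives log-concavity directly. The only hypotheses to confirm are that $G$ is connected, lies in $\W_p$, and is $\lambda$-quasi-regularizable, and all three are assumed.

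For part (b), I would set $L=\lfloor (p\alpha-1)/(p+1)\rfloor$ and $R=\lceil (n-1)/(\lambda+2)\rceil$ as in Theorem~\ref{thm:unimodal}. The hypothesis is then precisely $R\le L+1$. Theorem~\ref{thm:unimodal} yields the increasing chain $s_0\le\cdots\le s_{L+1}$ and the decreasing chain $s_R\ge\cdots\ge s_\alpha$. These overlap, so the sequence is unimodal, with peak at any index $t$ satisfying $R\le t\le L+1$.

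Parts (c) and (d) just supply the $\lambda=p$ input.
\begin{itemize}
\item For (c), I would quote the threshold theorem of \cite{HoangLevitMandrescuPham} for $p\ne2$. It states that for connected $\W_p$ graphs, $p$-quasi-regularizability is equivalent to $n\ge(p+1)\alpha$. So whenever $n\ge(p+1)\alpha$, the hypothesis of (a) and (b) holds with $\lambda=p$.
\item For (d), I would invoke Theorem~\ref{thm:conj42} in exactly the same way. In the forward direction, a $2$-quasi-regularizable graph satisfies $|N(S)|\ge2|S|$ at any $S\in\Omega(G)$, which gives $n\ge3\alpha$. In the reverse direction, Theorem~\ref{thm:local-expansion} handles non-maximum sets and the counting identity handles maximum sets. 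Hence $n\ge3\alpha$ gives $\lambda=2$ in (a) and (b).
\end{itemize}

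The only point needing care is a small technical gap. Lemma~\ref{lem:coef}(ii), which underlies both (a) and (b), is stated only for $1\le k\le\alpha-1$. I would check that the ranges used in Theorems~\ref{thm:LC-general} and \ref{thm:unimodal} stay within that range.

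In Theorem~\ref{thm:LC-general}, index $k-1$ appears, so $k=1$ uses the inequality at $k-1=0$, i.e. $p\alpha s_0\le s_1$. This holds anyway: $s_1=n$ and $s_0=1$, and $n\ge p\alpha$ because $G$ contains $p$ disjoint maximum independent sets.

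The same remark covers $k=0$ in the increasing chain of Theorem~\ref{thm:unimodal}.

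With that boundary case verified, every part follows by direct citation.
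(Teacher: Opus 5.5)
Your proposal is correct and follows the paper's approach: the corollary has no separate proof and is assembled from Theorem~\ref{thm:LC-general}, Theorem~\ref{thm:unimodal}, the Hoang--Levit--Mandrescu--Pham threshold for $p\ne2$, and Theorem~\ref{thm:conj42}. Your boundary check is a real improvement. The paper's proofs of Theorems~\ref{thm:LC-general} and \ref{thm:unimodal} use Lemma~\ref{lem:coef}(ii) at index $0$, which is outside its stated range. Your observation that $p\alpha s_0\le s_1$ follows from $n\ge p\alpha$ (apply the $\W_p$ definition to $p$ empty sets) closes that gap.
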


\begin{remark}
Part (d) is the missing $p=2$ counterpart of the structural input used in \cite{HoangLevitMandrescuPham}.  The coefficient inequalities themselves are uniform in $p$; the exceptional nature of $p=2$ lies in proving the exact quasi-regularizability threshold.
\end{remark}

\section{Final remarks}

The local theorem explains why the case $p=2$ was delicate.  In a smallest hypothetical counterexample, localizing at $A-\{x\}$ creates a $K_2$ component $\{x,m_x\}$.  Such a component is allowed in $\W_2$, whereas higher $p$ arguments usually benefit from stronger local degree constraints.  The proof above eliminates precisely this obstruction by showing that the remaining connection to $G_A$ must pass through a unique universal vertex, and then using the $\W_2$ extension property one final time.

For independence polynomials, the conclusion is that the conditions in Problem 4.1 can now be applied uniformly at the quasi-regularizability threshold: for $p\ne2$ by the theorem of Hoang--Levit--Mandrescu--Pham, and for $p=2$ by Theorem~\ref{thm:conj42}.  Componentwise versions follow from Lemma~\ref{lem:product}, which is why log-concavity results for connected components extend cleanly to disconnected $\W_p$ graphs.

\medskip
\noindent\textbf{Acknowledgment.}
The authors thank the referee for the careful reading and helpful suggestions.

\section*{Declaration of generative AI and AI-assisted technologies in the writing process}
During the preparation of this work the authors used ChatGPT-3.5 in order to improve the grammar of several paragraphs of the text. After using this service, the authors reviewed and edited the content as needed and take full responsibility for the content of the publication.

\section*{Data availability}
Data sharing is not applicable to this article, as no datasets were generated or analyzed during the current study.

\section*{Declarations}
\noindent\textbf{Conflict of interest.} The authors declare that they have no conflict of interest.

\end{document}